\documentclass{amsart}[12pt]
\usepackage{amscd}
\usepackage{mathrsfs}
\usepackage{tipa}
\usepackage{amsfonts}
\usepackage{mathrsfs}
\usepackage{indentfirst}
\usepackage{amsmath}
\usepackage{amssymb}
\usepackage{hyperref}
\usepackage{booktabs}   
\usepackage{amssymb}
\usepackage{graphicx}
\newtheorem{theo}{Theorem}[section]
\newtheorem{col}[theo]{Corollary}
\newtheorem{lem}[theo]{Lemma}

\newtheorem{rem}[theo]{Remark}
\numberwithin{equation}{section}

\newcommand{\be}{\begin{equation}}
	\newcommand{\ee}{\end{equation}}
\newcommand{\bes}{\begin{eqnarray}}
	\newcommand{\ees}{\end{eqnarray}}
\newcommand{\bess}{\begin{eqnarray*}}
	\newcommand{\eess}{\end{eqnarray*}}

\newcommand{\bali}{\begin{align}}
	\newcommand{\eali}{\end{align}}

\newcommand{\C}{{\mathcal C}}
\newcommand{\Dd}{{\mathcal C}}
\newcommand\FPdim{\operatorname{FPdim}}

\newcommand\rank{\operatorname{rank}}

\begin{document}
	\setlength{\baselineskip}{16pt} \pagestyle{myheadings}

	\title[Fusion categories of rank $6$]{The Grothendieck ring of strictly weakly integral fusion categories of rank $6$}
	\author{Kai Wang}
	\address{College of Mathematical Science, Yangzhou University, Yangzhou, China}
	\email{wangkww@163.com}

\author{Jingcheng Dong}
	\address{College of Mathematics and Statistics, Nanjing University of Information Science and Technology, Nanjing, China}
	\email{jcdong@nuist.edu.cn}

	\author{Libin Li}
	\address{College of Mathematical Science, Yangzhou University, Yangzhou, China}
	\email{lbli@yzu.edu.cn}
	
	\keywords{fusion category, extension, Grothendieck ring, categorification.}
	\date{}
	
	\thanks
	{ \\
		{Kai Wang}$^\diamondsuit$ Corresponding author.}

	\maketitle \noindent

	\begin{abstract}
	In this paper, we study strictly weakly integral fusion categories of rank \(6\) and give a complete classification of their Grothendieck rings. We obtain twelve candidate Grothendieck rings. Eight of them can be realized by fusion categories, while the remaining four are ruled out by necessary obstructions to categorification. Together with previously known results, this completes the
classification of Grothendieck rings of strictly weakly integral
fusion categories of rank at most $6$.
	\end{abstract}

	{\section{Introduction}}
Fusion categories provide a broad unifying framework for representation theory, subfactor theory, quantum groups, and topological quantum field theory. In physics, braided fusion categories and modular tensor categories supply the algebraic data of anyons and topological phases of matter, and they underlie topological quantum computation, knot and link invariants, and invariants of $3$-manifolds. In mathematics, they generalize the representation categories of finite groups and finite-dimensional semisimple Hopf algebras, and appear naturally in conformal field theory, vertex operator algebras and the classification of finite-index subfactors. Because of these connections, the classification of fusion categories and their Grothendieck rings has become a central theme in modern algebra and mathematical physics.

The classification of fusion categories by rank has been carried out only in low rank and under additional structural assumptions. Ostrik proved that there are exactly four fusion categories of rank $2$ \cite{Art1}. Ostrik also classified pivotal fusion categories of rank $3$ \cite{Art2}. For rank $4$, Bruillard completed the classification of premodular categories up to Grothendieck equivalence \cite{Art7}. Larson classified pseudo-unitary non-self-dual rank $4$ fusion categories with exactly two self-dual simple objects \cite{Art5}. Dong, Zhang, and Dai proved that a self-dual spherical rank $4$ fusion category with non-trivial grading has Grothendieck ring $\mathrm{F}\otimes \mathbb{Z}[\mathbb{Z}_2]$, where $\mathrm{F}$ is the Fibonacci fusion ring, and that if it is braided then it is equivalent to $\mathrm{Fib}\boxtimes \mathrm{Vec}_{\mathbb{Z}_2}^{\omega}$, where $\mathrm{Fib}$ is a Fibonacci category and $\mathrm{Vec}_{\mathbb{Z}_2}^{\omega}$
is a rank $2$ pointed fusion category, see \cite{Art4}. Wang and Dong studied rank $5$ fusion categories with non-trivial faithful grading and obtained five Grothendieck rings, three of which can be realized and two of which are new \cite{Art6}.

Further results exist under the additional hypothesis of a braiding. For instance, pre-modular categories of rank no higher than 5 are completely classified \cite{Art7,Art8}; partial classifications are known for super-modular categories up to rank 8 \cite{Art9,Art10}, and an almost complete classification is established for modular categories of rank at most 6 \cite{Art11,Art12}.

Recently, Alekseyev, Bruns, Dong, and Palcoux classified integral Grothendieck rings and Drinfeld rings up to rank $5$ in general and obtained extensive results in higher rank under additional assumptions \cite{Art18}.

Despite these advances, the classification of fusion categories remains difficult. Many results require pivotal, spherical, pseudo-unitary, braided, modular, or weakly integral assumptions. Non-integral, noncommutative, even-dimensional, and super-modular cases remain largely open. Proving that a candidate Grothendieck ring is not categorifiable is often a delicate obstruction problem, and even when a ring is categorifiable, determining all tensor equivalence classes of the corresponding categories can be subtle. In particular, the strictly weakly integral rank $6$ case is not covered by the general integral classifications or by the braided classifications mentioned above.

This paper studies strictly weakly integral fusion categories of rank $6$. It is shown that such a category admits a faithful $\mathbb{Z}_2$-grading whose trivial component is integral. By analyzing the possible ranks of the integral trivial component, the paper obtains twelve candidate Grothendieck rings, denoted $R_1,\dots,R_{12}$. Eight of these rings are realized by known fusion categories, while the remaining four are ruled out by necessary obstructions to categorification. The paper also explicitly determines all tensor equivalence classes of the Tambara--Yamagami category associated with the cyclic group $\mathbb{Z}_5$: up to tensor equivalence there are exactly four such categories, classified by invariants $(\chi_k,\tau_\pm)$ with $\tau_\pm=\pm 1/\sqrt{5}$ and $\chi_1(g,g)=e^{2\pi i/5}$, $\chi_2(g,g)=e^{4\pi i/5}$.

The paper is organized as follows. Section 2 collects the necessary preliminaries for the whole paper. Section 3 determines all possible Grothendieck rings of strictly weakly integral rank $6$ fusion categories. It first proves the strictly weakly integral fusion categories of rank $6$ admits a faithful $\mathbb{Z}_2$-grading with integral trivial component, then analyzes the cases in which this integral component has rank $1,2,3,4$, or $5$, and obtains the twelve candidate Grothendieck rings $R_1,\dots,R_{12}$. The section also summarizes which of these rings are already known to be realizable. Section 4 treats categorification. It proves that the four candidate rings $R_7,R_8,R_{11},R_{12}$ cannot be realized by any fusion category, thereby completing the classification: a strictly weakly integral fusion category of rank $6$ has Grothendieck ring isomorphic to exactly one of
\[
R_1,R_2,R_3,R_4,R_5,R_6,R_9,R_{10},
\]
and each of these eight rings is realized.

Throughout this paper, we always work over the complex number field $\mathbb{C}$.

	\section{Preliminaries}

	\subsection{Frobenius-Perron dimensions}
	Let $\mathcal{C}$ be a fusion category. Denote by $\operatorname{Irr}(\mathcal{C})$ the set of isomorphism classes of simple objects of $\mathcal{C}$. Then $\operatorname{Irr}(\mathcal{C})$ forms a $\mathbb{Z}^+$-basis of the Grothendieck ring $K_0(\mathcal{C})$. For $X\in\operatorname{Irr}(\mathcal{C})$, the Frobenius-Perron dimension $\operatorname{FPdim}(X)$ is the largest Frobenius-Perron eigenvalue of the fusion matrix $M_X$, which is given by the left multiplication by $X$ with respect to the basis $\operatorname{Irr}(\mathcal{C})$.
	
	The dimension of $\mathcal{C}$ is defined as $\operatorname{FPdim}(\mathcal{C}) = \sum_{X\in\operatorname{Irr}(\mathcal{C})} \operatorname{FPdim}(X)^2 .	$
	By \cite[Theorem 8.6]{Art13}, $\operatorname{FPdim}$ extends to a ring homomorphism $K_0(\mathcal{C})\to\mathbb{R}$.

	A fusion category $\mathcal{C}$ is called integral if $\operatorname{FPdim}(X)$ is an integer for every simple object $X\in\mathcal{C}$. A fusion category $\mathcal{C}$ is called weakly integral if $\operatorname{FPdim}(\mathcal{C})$ is an integer. A fusion category $\mathcal{C}$ is called strictly weakly integral if $\mathcal{C}$ is weakly integral but not integral. An example of an integral fusion category is the pointed fusion category in which all simple objects have Frobenius-Perron dimension $1$. A pointed fusion category is always tensor equivalent to the category $\operatorname{Vec}_G^\omega$ of finite dimensional $G$-graded vector spaces, where $G$ is a finite group and $\omega \in Z^3(G,\mathbb{C}^\times)$ is a $3$-cocycle \cite{Art13}.
	
	\subsection{Adjoint functors}

	Let $\mathcal{C}$ be a fusion category. The Drinfeld center $\mathcal{Z}(\mathcal{C})$ of $\mathcal{C}$ is also a fusion category. Its objects are pairs $(X, c_{-,X})$ consisting of an object $X\in\mathcal{C}$ and a family of natural isomorphisms $c_{V,X}\colon V\otimes X \overset{\sim}{\to} X\otimes V$ for all $V\in\mathcal{C}$. For a detailed account, we refer to \cite[Definition VIII4.1]{Art22}.
	
	Let $\mathcal{F}\colon\mathcal{Z}(\mathcal{C})\to\mathcal{C}$ denote the forgetful functor, defined by $\mathcal{F}\big[(X, c_{-,X})\big] = X$, and let $\mathcal{I}\colon\mathcal{C}\to\mathcal{Z}(\mathcal{C})$ be its right adjoint. Then $\mathcal{I}(\mathbf{1})$ is a commutative algebra in $\mathcal{Z}(\mathcal{C})$ in which the unit object $\mathbf{1}$ embeds as a simple object, see \cite[Lemma 3.2]{Art23}. Moreover, by \cite[Proposition 5.4]{Art13} we have the formula
	\[
	\mathcal{F}(\mathcal{I}(V)) \cong \bigoplus_{Y\in\operatorname{Irr}(\mathcal{C})} Y\otimes V\otimes Y^{*}.
	\]
	\subsection{Group grading of a fusion category}
	
	Let $G$ be a finite group. A fusion category $\mathcal{C}$ is said to admit a $G$-grading if it can be written as a direct sum of full abelian subcategories $\mathcal{C}=\bigoplus_{g\in G}\mathcal{C}_g$ such that $(\mathcal{C}_g)^*=\mathcal{C}_{g^{-1}}$ and $\mathcal{C}_g\otimes\mathcal{C}_h\subseteq\mathcal{C}_{gh}$ for all $g,h\in G$. The grading is called faithful if each $\mathcal{C}_g\neq 0$. When the grading is faithful, we say that $\mathcal{C}$ is a $G$-extension of the trivial component $\mathcal{C}_e$, and in this case one has $\operatorname{FPdim}(\mathcal{C}_g)=\operatorname{FPdim}(\mathcal{C}_h)$ for all $g,h\in G$, as well as $\operatorname{FPdim}(\mathcal{C})=|G|\operatorname{FPdim}(\mathcal{C}_e)$, see \cite[Proposition 8.20]{Art13}.

We recall the following theorem from \cite[Theorem 3.10]{Art17}, which will be used frequently in what follows.
	\begin{theo}\label{theo:grading}
		Let $\mathcal{C}$ be a weakly integral fusion category. Then there exists an elementary abelian $2$-group $E$, a set of distinct square-free positive integers $\{n_x \mid x\in E,\ n_0=1\}$, and a faithful grading $\mathcal{C} = \bigoplus_{x\in E}\mathcal{C}_x$, such that for any $X\in \operatorname{Irr}(\mathcal{C}_x)$, we have $\operatorname{FPdim}(X) \in \mathbb Z\sqrt{n_x}$.
	\end{theo}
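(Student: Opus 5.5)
We prove the statement by decomposing $\mathcal{C}$ according to the square-class of $\FPdim(X)^2$, writing this class as $\mathbb{Q}^{\times 2}$ times a square-free integer. The plan has four steps.

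\textbf{Step 1: dimensions are square roots of integers.} Since $\mathcal{C}$ is weakly integral, \cite[Proposition 8.27]{Art13} gives $\FPdim(X)^2\in\mathbb{Z}$ for every simple $X$. Hence $\FPdim(X)=m_X\sqrt{n_X}$ with $m_X$ a positive integer and $n_X$ a uniquely determined square-free positive integer. Put $\mathcal{C}_n$ for the full abelian subcategory generated by those simple $X$ with $n_X=n$, and let $N=\{n : \mathcal{C}_n\neq 0\}$. The unit gives $1\in N$. Duals have the same dimension, so $(\mathcal{C}_n)^*=\mathcal{C}_n$.

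\textbf{Step 2: the products respect the classes.} Take simple $X\in\mathcal{C}_n$ and $Y\in\mathcal{C}_m$, and let $Z$ be a simple summand of $X\otimes Y$. We must show $n_Z=nm/\gcd(n,m)^2$, which is the square-free part $n\ast m$ of $nm$. Let $G$ be the multiplicative group of $\mathbb{Q}(\sqrt{p}:p \text{ prime})$ modulo $\mathbb{Q}^{\times}$.

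The dimension identity
\[
\FPdim(X)\FPdim(Y)=\sum_Z N_{XY}^Z\,\FPdim(Z)
\]
has left side equal to $m_Xm_Y\gcd(n,m)\sqrt{n\ast m}$. The numbers $\sqrt{k}$, for $k$ distinct square-free, are linearly independent over $\mathbb{Q}$. Grouping the right side by square-free class, every class other than $n\ast m$ must therefore have zero total. But each summand there is a positive rational multiple of $\sqrt{k}$, so no cancellation is possible, and every $Z$ with $N_{XY}^Z>0$ has $n_Z=n\ast m$. This positivity argument is the key point, and I expect it to be the only nontrivial step.

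\textbf{Step 3: $N$ is an elementary abelian $2$-group.} By Step 2, $N$ is closed under $\ast$, since every $X\otimes Y$ is nonzero. The operation $\ast$ makes the set of square-free integers an elementary abelian $2$-group, namely $\mathbb{F}_2$-vectors indexed by primes. A finite nonempty subset closed under the operation is a subgroup $E$, and it is elementary abelian. Its identity is $n_0=1$.

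\textbf{Step 4: the grading is faithful.} Define $\mathcal{C}_x=\mathcal{C}_{n_x}$ for $x\in E$. Step 2 gives $\mathcal{C}_x\otimes\mathcal{C}_y\subseteq\mathcal{C}_{xy}$, and since $x^{-1}=x$ and $(\mathcal{C}_x)^*=\mathcal{C}_x$, the duality condition holds. Faithfulness is automatic because each $\mathcal{C}_x$ is nonzero by the definition of $N$. Finally, every $X\in\operatorname{Irr}(\mathcal{C}_x)$ has $\FPdim(X)\in\mathbb{Z}\sqrt{n_x}$ by construction.
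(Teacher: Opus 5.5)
The paper does not prove this statement itself; it is quoted from \cite[Theorem 3.10]{Art17}. Your argument is correct and complete, and it follows the standard route to that result: \cite[Proposition 8.27]{Art13} gives $\FPdim(X)\in\mathbb{Z}\sqrt{n_X}$ with $n_X$ square-free, and positivity of Frobenius--Perron dimensions together with the $\mathbb{Q}$-linear independence of $\sqrt{k}$ for distinct square-free $k$ forces every simple constituent of $X\otimes Y$ into the class $n_X\ast n_Y$, so the occurring square classes form the elementary abelian $2$-group of a faithful grading. The only blemish is the group $G$ introduced in Step 2: it is never used, and it is much larger than the group you actually work with (the square-free integers under $\ast$), so that sentence can simply be deleted.
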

	
Before stating the following lemma, which will be used in Section \ref{3.3}, we first recall the definition of a Tambara-Yamagami category.
	       A Tambara-Yamagami fusion category is a fusion category with isomorphism classes of simple objects given by the set $G \cup \{X\}$, where $G$ is a finite abelian group and $X$ is a noninvertible simple object. Its fusion rules are determined by:
	\[
	s \otimes t = st,\ s,t \in G,\quad X \otimes X = \bigoplus_{g\in G} g.
	\]
	
	Such a fusion category is determined by a symmetric non-degenerate bicharacter $\chi \colon G \times G \to \mathbb{C}^\times$ and an element $\tau \in \mathbb{C}$ satisfying $|G|\tau^2 = 1$, see \cite[Theorem 3.2]{Art19}.
	\begin{lem}\label{lem:abelian}
		Let $\mathcal{C}$ be a fusion category equipped with a faithful $\mathbb{Z}_2$-grading $\mathcal{C}=\mathcal{C}_0\oplus\mathcal{C}_1$ such that $\mathcal{C}_0\cong\operatorname{Rep}(G)$, where $G$ is a finite group and $\operatorname{Rep}(G)$ is the fusion category of finite-dimensional representations of $G$. Suppose that $\mathcal{C}_1$ contains exactly one simple object. Then the following statements hold:
		
		(1) The group $G$ is abelian;
		
		(2) There is a tensor equivalence $\mathcal{C}_0\cong\operatorname{Vec}_{\widehat{G}}$, where $\widehat{G}$ denotes the dual group of $G$;
		
		(3) $\mathcal{C}$ is a Tambara-Yamagami fusion category.

		In particular, if $G$ is nonabelian, then no such faithful $\mathbb{Z}_2$-graded extension exists.
	\end{lem}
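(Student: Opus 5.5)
Let $X$ denote the unique simple object of $\mathcal{C}_1$. The plan is to exploit the fact that $\mathcal{C}_1$ is an invertible $\mathcal{C}_0$-bimodule category with a single simple object. First, since $(\mathcal{C}_1)^*=\mathcal{C}_1$, we have $X^*\cong X$. For every simple $V\in\mathcal{C}_0$, the product $V\otimes X$ lies in $\mathcal{C}_1$, so $V\otimes X\cong X^{\oplus m_V}$ with $m_V=\FPdim(V)$. Frobenius reciprocity gives $\operatorname{Hom}(V\otimes X,X)\cong\operatorname{Hom}(V,X\otimes X^*)=\operatorname{Hom}(V,X\otimes X)$. Hence
\[
X\otimes X\cong\bigoplus_{V\in\operatorname{Irr}(\mathcal{C}_0)}V^{\oplus \FPdim(V)},
\]
and $\FPdim(X)^2=\FPdim(\mathcal{C}_0)=|G|$, consistent with $\FPdim(\mathcal{C}_1)=\FPdim(\mathcal{C}_0)$.

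The key step is to show that every simple $V$ of $\mathcal{C}_0$ is invertible. The tensor product induces an action of $\mathcal{C}_0$ on the rank-one module category $\mathcal{C}_1$, that is, a tensor functor $\mathcal{C}_0\to\operatorname{End}(\mathcal{C}_1)\simeq\operatorname{Vec}$. This functor is in fact an equivalence $\mathcal{C}_0^{\mathrm{op}}\simeq\mathcal{C}_0^*{}_{\mathcal{C}_1}$ onto the dual category, because $\mathcal{C}_1$ is an invertible bimodule category over $\mathcal{C}_0$ for a faithful grading (ENO, fusion categories and homotopy theory). The dual of $\mathcal{C}_0$ with respect to a rank-one module category is $\operatorname{Fun}_{\mathcal{C}_0}(\mathcal{C}_1,\mathcal{C}_1)$. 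Its simple objects correspond to the simple summands of the algebra $\underline{\operatorname{End}}(X)$, and one needs to compare this with the left action as well. A more elementary route that I would try first uses the fusion ring directly. The bimodule $\mathcal{C}_1$ gives a fiber functor $F\colon\mathcal{C}_0\to\operatorname{Vec}$ with $\dim F(V)=\FPdim(V)$. Separately, the invertibility of $\mathcal{C}_1$ implies $\mathcal{C}_0\boxtimes\mathcal{C}_0^{\mathrm{op}}$-modules satisfy $\operatorname{rank}\operatorname{Fun}_{\mathcal{C}_0\boxtimes\mathcal{C}_0^{\mathrm{op}}}(\mathcal{C}_1,\mathcal{C}_1)=\operatorname{rank}\mathcal{Z}(\mathcal{C}_0)$. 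An easier count goes through $\mathcal{Z}(\mathcal{C}_0)$, whose invertible objects act on $\mathcal{C}_1$. Concretely, I would instead use that $\operatorname{End}_{\mathcal{C}_0}(\mathcal{C}_1)\simeq\mathcal{C}_0$ (invertibility gives left action equal to the dual). $\mathcal{C}_1=\operatorname{Vec}$ as a left module, so this dual is $\operatorname{Rep}(H)$, where $H$ is the Hopf algebra reconstructing $\mathcal{C}_0$ via $F$. For $\operatorname{Rep}(G)$ the module categories of rank one correspond to fiber functors, which come from pairs $(K,\psi)$ with $K\le G$, $|K|=|G|$, and $\psi$ a nondegenerate $2$-cocycle, so $K=G$ and $\psi$ is nondegenerate on $G$. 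The dual is then $\operatorname{Vec}_G^{\omega}$-type, namely $\operatorname{Rep}(G^{\psi}\text{-twist})$, which is pointed, and it must be equivalent to $\mathcal{C}_0=\operatorname{Rep}(G)$. Hence $\operatorname{Rep}(G)$ is pointed, so all irreducible representations of $G$ are one-dimensional and $G$ is abelian, giving (1).

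Then $\operatorname{Rep}(G)$ for abelian $G$ is equivalent as a tensor category to $\operatorname{Vec}_{\widehat G}$ with trivial associator, since its associator is the trivial one of vector spaces; this gives (2). The fusion rules now read $g\otimes h=gh$ and $X\otimes X=\bigoplus_{g\in\widehat G}g$, with $g\otimes X\cong X\cong X\otimes g$, which is exactly the Tambara--Yamagami fusion rule for $\widehat G$; this gives (3), and the final sentence follows from (1). The main obstacle is the rigorous passage from "one simple object in $\mathcal{C}_1$" to "$\mathcal{C}_0$ is pointed". I expect to cite the invertible-bimodule equivalence $\mathcal{C}_0\simeq(\mathcal{C}_0)^*_{\mathcal{C}_1}$ together with the Movshev/Ostrik classification of fiber functors on $\operatorname{Rep}(G)$, whose dual over a fiber functor from a nondegenerate cocycle on all of $G$ is $\operatorname{Vec}_G$.
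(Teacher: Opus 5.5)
\textbf{Your route is the paper's route, and the decisive step is a genuine gap.} Like the paper, you view $\mathcal{C}_1$ as a rank-one invertible $\mathcal{C}_0$-bimodule, so that $\mathcal{C}_0^{\mathrm{op}}$ is equivalent to the dual of $\operatorname{Rep}(G)$ with respect to a fiber functor. You then argue that this dual is pointed. That last step fails, and your justification for it is incorrect.

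The condition ``$|K|=|G|$, hence $K=G$'' is the rank-one criterion for the $\operatorname{Vec}_G$-module categories $\mathcal{M}(K,\psi)$, whose rank is $[G:K]$. It is not the criterion for $\operatorname{Rep}(G)$-module categories; it would even exclude the forgetful functor. The correct classification is that fiber functors on $\operatorname{Rep}(G)$ correspond to pairs $(K,\psi)$, where $K\le G$ is arbitrary and $\psi$ is a nondegenerate $2$-cocycle on $K$; the forgetful functor is the case $K=1$.

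The dual category is then the group-theoretical category $\mathcal{C}(G,1,K,\psi)$. For $K=1$ this is $\operatorname{Vec}_G$. In general, however, its summand supported on the trivial double coset is $\operatorname{Rep}(K)$, so for $K\neq 1$ it is usually not pointed. In your own case $K=G$, the dual would be equivalent to $\operatorname{Rep}(G)$ itself, which gives no contradiction at all. The paper's proof has exactly the same hole: its sentence ``the dual category \dots\ is $\operatorname{Vec}_G^1$'' tacitly assumes the fiber functor is the standard one.

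\textbf{The step cannot be repaired: conclusion (1) is false in this generality.} Here is a counterexample.
\begin{itemize}
\item Let $\mathcal{T}$ be a Tambara--Yamagami category for $\mathbb{Z}_3$, and let $\Gamma=\mathbb{Z}_2\times\mathbb{Z}_2$ act on it. The first factor acts through inversion on $\mathbb{Z}_3$; this preserves $\chi$, so it acts by strict tensor automorphisms fixing $X$. The second factor acts trivially.
\item Twist the composition isomorphisms $F_gF_h\cong F_{gh}$ by $\epsilon^{c(g,h)}$. Here $\epsilon$ is the tensor automorphism of $\operatorname{id}_{\mathcal{T}}$ that equals $-1$ on $X$, and $c(g,h)=g_1h_2$.
\item In the equivariantization $\mathcal{T}^{\Gamma}$, the simple objects lying over $X$ are the irreducible projective representations of $\Gamma$ for the nondegenerate class $(-1)^{c}$. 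So the odd component has exactly one simple object, of dimension $2\sqrt{3}$.
\item The even component is $(\operatorname{Vec}_{\mathbb{Z}_3})^{\Gamma}\simeq\operatorname{Rep}(\mathbb{Z}_3\rtimes\Gamma)=\operatorname{Rep}(S_3\times\mathbb{Z}_2)$, and $S_3\times\mathbb{Z}_2$ is nonabelian.
\end{itemize}

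\textbf{What survives.} If no nontrivial subgroup of $G$ carries a nondegenerate $2$-cocycle, every fiber functor is standard, the dual is $\operatorname{Vec}_G$, and your argument goes through. This covers $D_7$, $\mathbb{Z}_7\rtimes\mathbb{Z}_3$ and $F_5$, whose square-order subgroups are trivial or cyclic. For $A_5$, the Klein four-subgroups do give nonstandard fiber functors. In that case the dual contains $\operatorname{Rep}(\mathbb{Z}_2\times\mathbb{Z}_2)$, hence at least four invertible objects, which rules out an equivalence with $\operatorname{Rep}(A_5)^{\mathrm{op}}$. A correct version must therefore analyse $\mathcal{C}(G,1,K,\psi)$ for every admissible pair $(K,\psi)$, not just $K=1$.
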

	
	\begin{proof}
		Let $\mathcal{C}=\mathcal{C}_{0} \oplus \mathcal{C}_{1}$ be a faithful $\mathbb{Z}_{2}$-graded fusion category with $\mathcal{C}_{0} \cong \operatorname{Rep}(G)$, and suppose that $\mathcal{C}_{1}$ has exactly one simple object, denoted by $X$.
		
		By the extension theory \cite{Art14}, a faithful $G$-graded extension of a fusion category $\mathcal{D}$ is controlled first by a group homomorphism $c: G \to \operatorname{BrPic}(\mathcal{D})$, where $\operatorname{BrPic}(\mathcal{D})$ denotes the Brauer-Picard group of invertible $\mathcal{D}$-bimodule categories. In the present case the grading group is $\mathbb{Z}_{2}$ and $\mathcal{D}=\mathcal{C}_{0}$. Thus $\mathcal{C}_{1}$ determines an invertible $\mathcal{C}_{0}$-bimodule category $\mathcal{M}:=\mathcal{C}_{1}$ whose class satisfies $[M]^{2}=1$ in $\operatorname{BrPic}(\mathcal{C}_{0})$.
		
		Since $\mathcal{C}_{1}$ has only one simple object, the underlying abelian category of $\mathcal{M}$ has rank one. Thus, as a left $\mathcal{C}_{0}$-module category, $M$ is equivalent to a rank-one module category over $\operatorname{Rep}(G)$. Such a module category is given by the fiber functor $\operatorname{Rep}(G) \to \operatorname{Vec}$. The dual category of $\operatorname{Rep}(G)$ with respect to this module category is $\operatorname{End}_{\operatorname{Rep}(G)}(\operatorname{Vec}) \cong \operatorname{Vec}_{G}^{1}$.
		
		On the other hand, since $\mathcal{M}$ is an invertible $\mathcal{C}_{0}$-bimodule category, its right $\mathcal{C}_{0}$-action identifies $\mathcal{C}_{0}^{\mathrm{op}}$ with the dual category of $\mathcal{C}_{0}$ with respect to $\mathcal{M}$. Therefore we must have a tensor equivalence $\operatorname{Rep}(G) \cong \operatorname{Vec}_{G}^{1}$. The category $\operatorname{Vec}_{G}^{1}$ is pointed. Hence $\operatorname{Rep}(G)$ must also be pointed. This happens if and only if every irreducible representation of $G$ is one-dimensional, equivalently if and only if $G$ is abelian. Hence $G$ is abelian. This proves parts (1), (2). Part (3) is obvious.
	\end{proof}

	\section{the candidate grothendieck rings of the category }\label{3}
	In this section, we determine all possible Grothendieck rings of strictly weakly integral fusion categories $\mathcal{C}$ of rank $6$. As shown in the following lemma, such a category must admit a faithful $\mathbb{Z}_2$-grading $\mathcal{C} = \mathcal{C}_0 \oplus \mathcal{C}_1$, where the trivial component $\mathcal{C}_0$ is integral.
	
	\begin{lem}
		If $\mathcal{C}$ is a strictly weakly integral fusion category of rank 6, then $\mathcal{C}$ admits a faithful $\mathbb{Z}_2$-grading $\mathcal{C}=\mathcal{C}_0\oplus\mathcal{C}_1$ such that $\mathcal{C}_0$ is integral.
	\end{lem}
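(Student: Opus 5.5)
We will apply Theorem \ref{theo:grading} to $\mathcal{C}$. It produces an elementary abelian $2$-group $E$, distinct square-free integers $n_x$ with $n_0=1$, and a faithful grading $\mathcal{C}=\bigoplus_{x\in E}\mathcal{C}_x$ such that every simple object of $\mathcal{C}_x$ has dimension in $\mathbb{Z}\sqrt{n_x}$. Since $n_0=1$, the trivial component $\mathcal{C}_0$ is automatically integral. Since $\mathcal{C}$ is not integral, $E$ is nontrivial: otherwise every simple object would have integer dimension. So $|E|=2^k$ with $k\ge 1$, and it remains to show $k=1$.

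Since the grading is faithful, each $\mathcal{C}_x$ contains at least one simple object, so $2^k\le 6$ and hence $k\le 2$. The main step is to rule out $E\cong\mathbb{Z}_2\times\mathbb{Z}_2$. In that case the four components share the six simple objects. The unit lies in $\mathcal{C}_0$, and by \cite[Proposition 8.20]{Art13} all components have the same dimension $d=\FPdim(\mathcal{C}_0)$. For $x\neq 0$ we have $n_x>1$ square-free, so every simple object in $\mathcal{C}_x$ has dimension $a\sqrt{n_x}$ with $a\geq 1$ an integer. Hence $d$ is a sum of terms $a_i^2 n_x$, and so $n_x\mid d$ for each of the three nontrivial $x$.

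We then split into cases on how the rank is distributed, namely $(r_0,r_a,r_b,r_c)$ with $\sum r=6$ and each $r\geq 1$. Up to permutation the possibilities are $(3,1,1,1)$, $(1,3,1,1)$, $(2,2,1,1)$ and $(1,2,2,1)$. We will use two facts. First, a component with exactly one simple object $X$ satisfies $d=\FPdim(X)^2$. Second, for $X\in\mathcal{C}_x$ the object $X\otimes X^*\in\mathcal{C}_0$ contains the unit, which forces $\FPdim(X)^2\le d$. Consider three cases.
\begin{itemize}
\item If $r_0=1$, then $d=1$, so every component is pointed and $\mathcal{C}$ would be integral, a contradiction.
\item If some nontrivial component, say $\mathcal{C}_a$, has a single simple object, then $d=a^2 n_a$. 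If two nontrivial components $\mathcal{C}_a,\mathcal{C}_b$ each have a single simple object, then $d=a^2n_a=b^2n_b$. Since $n_a$ and $n_b$ are square-free, this forces $n_a=n_b$, contradicting distinctness.
\item Therefore only the distribution $(2,2,1,1)$ needs a separate argument, with $r_0=2$, and that already has two singleton nontrivial components. The distribution $(3,1,1,1)$ likewise has singleton components and is excluded by the same argument.
\end{itemize}
Every distribution is covered, so $E=\mathbb{Z}_2$.

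The main obstacle is making sure this case analysis is exhaustive. All distributions other than those with $r_0=1$ contain at least two singleton nontrivial components. The only distributions with at most one singleton nontrivial component are $(1,2,2,1)$ and $(1,3,1,1)$, and both have $r_0=1$. So the square-free argument together with the pointedness argument closes all cases. The final conclusion is $E\cong\mathbb{Z}_2$, giving the faithful grading $\mathcal{C}=\mathcal{C}_0\oplus\mathcal{C}_1$ with $\mathcal{C}_0$ integral.
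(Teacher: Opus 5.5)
Your proof is correct and follows essentially the same route as the paper: faithfulness gives $|E|\le 6$, and for $E\cong\mathbb{Z}_2\times\mathbb{Z}_2$ two components must each contain a single simple object, whose dimensions $a\sqrt{n_x}=b\sqrt{n_y}$ then force $n_x=n_y$. The paper handles all distributions at once, since a singleton $\mathcal{C}_0$ fits the same contradiction via $n_0=1$, so your separate $r_0=1$ case is not needed; your side remarks also contain harmless, unused slips: $X\otimes X^*\in\mathcal{C}_0$ does not itself give $\FPdim(X)^2\le d$ (that comes from $\FPdim(\mathcal{C}_x)=d$), and $(1,3,1,1)$ does have two singleton nontrivial components, but your case analysis is exhaustive and every case is closed correctly.
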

	\begin{proof}By Theorem \ref{theo:grading}, $\mathcal C$ admits a faithful grading by an
elementary abelian $2$-group $E$ whose trivial component is integral.
Since $\mathcal C$ is strictly weakly integral, $E$ is nontrivial.
Faithfulness gives $|E|\leq\operatorname{rank}(\mathcal C)=6$.
Hence $E\cong\mathbb Z_2$ or $\mathbb Z_2\times\mathbb Z_2$.

Assume that  $E=\mathbb{Z}_2 \times \mathbb{Z}_2$ and
$\mathcal{C} = \mathcal{C}_0 \oplus \mathcal{C}_1 \oplus \mathcal{C}_2 \oplus \mathcal{C}_3 $. It follows that there exist at least two components, say $\mathcal{C}_i$ and $\mathcal{C}_j$, each of which contains only a simple object. Since $\operatorname{FPdim}(\mathcal{C}_i)=\operatorname{FPdim}(\mathcal{C}_j)$, we get that such two simple objects have the same FP dimensions, which contradicts Theorem \ref{theo:grading}.
	\end{proof}
	
	We then separately discuss all possible Grothendieck rings of $\mathcal{C}$ under the assumption that the integral trivial component $\mathcal{C}_0$ has rank $1,2,3,4$ or $5$.

	\subsection{Extensions of an integral fusion category of rank $1$, $2$ or $3$}
 As preliminaries for this subsection, we begin by reviewing the Frobenius Reciprocity. For a simple object $X$ and an arbitrary object $Y$ in $\mathcal{C}$, the multiplicity of $X$ in $Y$ is
	$
	[X,Y] = \dim\operatorname{Hom}_{\mathcal{C}}(X,Y).
	$
	We have $[X,Y]=[X^*,Y^*]$. Moreover, for $X,Y,Z\in\operatorname{Irr}(\mathcal{C})$,
	$
	[X,\,Y\otimes Z] = [Y,\,X\otimes Z^*] = [Y^*,\,Z\otimes X^*].
	$
	In particular, we have $[\mathbf 1,\,X\otimes X^*]=1$ by Schur's lemma.

	Throughout this subsection, $\mathcal{C}$ is a strictly weakly integral fusion category of rank $6$. Then $\mathcal{C}$ is a $\mathbb{Z}_2$-extension of an integral fusion category $\mathcal{C}_0$.
	If $\operatorname{rank}(\mathcal{C}_0)=1$ or $2$  then $\mathcal{C}_0$ must be pointed by \cite{Art1}. Hence $\FPdim(\C_1)\leq2$ which implies that $\rank(\C_1)\leq2$ and hence $\rank(\C)\leq4$, a contradiction.
	In the rest of this subsection, we only consider the case that $\operatorname{rank}(\mathcal{C}_0)=3$.
	
	\begin{theo}
		There does not exist a strictly weakly integral fusion category of rank $6$ that is a $\mathbb{Z}_2$-extension of an integral fusion category of rank $3$.
		

	\end{theo}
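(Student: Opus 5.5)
We plan to pin down $\mathcal{C}_0$ and the dimensions in $\mathcal{C}_1$ by a dimension count, and then rule out the one surviving configuration by associativity of the fusion rules.

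\emph{Step 1: identify $\mathcal{C}_0$.} Suppose $\mathcal{C}=\mathcal{C}_0\oplus\mathcal{C}_1$ is such an extension. Since $\operatorname{rank}(\mathcal{C}_0)=3$, the component $\mathcal{C}_1$ has rank $3$. An integral fusion category is pseudo-unitary, hence admits a spherical (in particular pivotal) structure. So Ostrik's classification of pivotal rank $3$ fusion categories \cite{Art2} applies. The only integral possibilities are the pointed categories $\operatorname{Vec}_{\mathbb{Z}_3}^{\omega}$, with $\operatorname{FPdim}=3$, and categories Grothendieck equivalent to $\operatorname{Rep}(S_3)$, with dimensions $1,1,2$ and $\operatorname{FPdim}=6$.

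\emph{Step 2: the dimension count.} By Theorem \ref{theo:grading}, every simple object of $\mathcal{C}_1$ has dimension $a_i\sqrt{n}$, where $a_i$ is a positive integer and $n>1$ is square-free; $n>1$ because $\mathcal{C}$ is not integral. Faithfulness of the grading gives
\[
\operatorname{FPdim}(\mathcal{C}_0)=\operatorname{FPdim}(\mathcal{C}_1)=n(a_1^2+a_2^2+a_3^2)\ \geq\ 3n\ \geq\ 6 .
\]
This excludes the pointed case. In the $\operatorname{Rep}(S_3)$ case it forces $n=2$ and $a_1=a_2=a_3=1$. Hence $\operatorname{Irr}(\mathcal{C}_0)=\{\mathbf 1,g,V\}$ with $g^2=\mathbf 1$ and $V\otimes V=\mathbf 1\oplus g\oplus V$, and $\operatorname{Irr}(\mathcal{C}_1)=\{X_1,X_2,X_3\}$, each of dimension $\sqrt2$. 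Since the grading group is $\mathbb{Z}_2$, duality permutes the $X_i$.

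\emph{Step 3: the fusion rules forced in $\mathcal{C}_1$.} For each $i$, the object $X_i\otimes X_i^*$ lies in $\mathcal{C}_0$, has dimension $2$, and contains $\mathbf 1$ exactly once. Since $\mathbf 1\oplus V$ would have dimension $3$, we get $X_i\otimes X_i^*=\mathbf 1\oplus g$. Frobenius reciprocity then gives $[g\otimes X_i,X_i]=1$, so $g\otimes X_i=X_i$. For $j\neq i$, reciprocity gives
\[
[\mathbf 1,X_i\otimes X_j^*]=[X_j,X_i]=0 \quad\text{and}\quad [g,X_i\otimes X_j^*]=[g\otimes X_j,X_i]=0,
\]
so the dimension-$2$ object $X_i\otimes X_j^*$ must equal $V$. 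Consequently $[X_i,V\otimes X_j]=[V,X_i\otimes X_j^*]=1$ for both $i\neq j$. Comparing dimensions, $V\otimes X_j=X_i\oplus X_k$, where $\{i,j,k\}=\{1,2,3\}$.

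\emph{Step 4: the contradiction.} Compute $X_1\otimes X_2^*\otimes X_2$ in two ways:
\[
(X_1\otimes X_2^*)\otimes X_2=V\otimes X_2=X_1\oplus X_3,\qquad X_1\otimes(X_2^*\otimes X_2)=X_1\otimes(\mathbf 1\oplus g)=2X_1 .
\]
Here $X_2^*\otimes X_2=\mathbf 1\oplus g$ holds because $X_2^*$ is again one of the $X_i$. The two answers differ, contradicting associativity, so no such category exists.

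The main obstacle is Step 1. One must justify that every integral rank $3$ fusion category falls under Ostrik's pivotal classification. This needs pseudo-unitarity, hence sphericity, of integral fusion categories, and a careful extraction of the integral cases from \cite{Art2}. Once $\mathcal{C}_0$ is known, Steps 2 through 4 are elementary dimension and Frobenius reciprocity arguments.
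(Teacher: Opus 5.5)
Your proof is correct and follows essentially the same route as the paper: Ostrik's rank-$3$ classification (which you justify via pseudo-unitarity, a point the paper leaves implicit), the dimension count forcing $\operatorname{FPdim}=\sqrt2$ for every simple object of $\mathcal{C}_1$, and $X_i\otimes X_i^*=\mathbf 1\oplus g$. The two proofs differ only at the end: the paper cites \cite[Lemma 2.5]{dong2012frobenius} to see that $X_1\otimes X_2^*$ is not simple and then invokes Schur's lemma, whereas you show directly that $X_1\otimes X_2^*\cong V$ and get a self-contained contradiction from associativity. One line should be added: Step 4 uses the right action $X_1\otimes g\cong X_1$, while Step 3 only proves the left action $g\otimes X_i\cong X_i$; this follows from $[X_1\otimes g,X_1]=[g,X_1^*\otimes X_1]=1$, since $X_1^*$ is again one of the $X_i$.
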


	\begin{proof}
		Assume there exists a strictly weakly integral fusion category of rank $6$ that is a $\mathbb{Z}_2$-extension of an integral fusion category of rank $3$. Let $\mathcal{C}_0$ be an integral fusion category of rank $3$ with $\operatorname{Irr}(\mathcal{C}_0)=\{\mathbf{1},X,Y\}$ and $\operatorname{Irr}(\mathcal{C}_1)=\{M_1,M_2,M_3\}$. Up to Grothendieck ring equivalence, $\mathcal{C}_0$ can only be $\operatorname{Rep}(\mathbb{Z}_3)$ or $\operatorname{Rep}(S_3)$ by \cite{Art2}.
		
		If $\mathcal{C}_0 \cong \operatorname{Rep}(\mathbb{Z}_3)$, then $\operatorname{FPdim}(X)=\operatorname{FPdim}(Y)=1$. A faithful $\mathbb{Z}_2$-grading forces $\operatorname{FPdim}(\mathcal{C}_0)=\operatorname{FPdim}(\mathcal{C}_1)=3$, so $\operatorname{FPdim}(M_i)=1$ for all $i$, which contradicts the assumption that $\mathcal{C}$ is strictly weakly integral.
		
		If $\mathcal{C}_0 \cong \operatorname{Rep}(S_3)$, then $\operatorname{FPdim}(X)=1$, $\operatorname{FPdim}(Y)=2 $. A faithful $\mathbb{Z}_2$-grading forces $\operatorname{FPdim}(\mathcal{C}_1)=\operatorname{FPdim}(\mathcal{C}_0)=6$. By Theorem \ref{theo:grading}, the Frobenius-Perron dimension of each simple object in $\mathcal{C}_1$ takes the form $k\sqrt{n}$, where $k\in\mathbb{Z}$ and $n$ is a fixed square-free positive integer. We can write
		$
		\operatorname{FPdim}(M_1)=a\sqrt{n}, \ \operatorname{FPdim}(M_2)=b\sqrt{n}, \ \operatorname{FPdim}(M_3)=c\sqrt{n}
		$ for integers $a,b,c$. The total Frobenius-Perron dimension of $\mathcal{C}_1$ satisfies
		$
		\operatorname{FPdim}(\mathcal{C}_1)=\big(a^2+b^2+c^2\big)n=6.
		$
		The only admissible solution to this Diophantine equation is $a=b=c=1$, $n=2$. Therefore $\operatorname{FPdim}(M_1)=\operatorname{FPdim}(M_2)=\operatorname{FPdim}(M_3)=\sqrt{2}$.

It is obvious that there exists at least one self-dual simple object among $M_1,M_2,M_3$. Without loss of generality, we may assume that $M_1$ is self-dual. Since $M_i\otimes M_i^*\in \mathcal{C}_0$ for all $i$, and $[\mathbf{1},M\otimes M^*]=1$, the only possible decomposition of $M_i\otimes M_i^*$ is $\mathbf{1}\oplus X$ by Frobenius-Perron dimension counting.
 It follows from \cite[Lemma 2.5]{dong2012frobenius} that $M_1^*\otimes M_2$ is not simple. Hence $M_1^*\otimes M_2=\mathbf{1}\oplus X$, which is the unique possibility. This is impossible by the Schur's lemma.
\end{proof}

\subsection{Extensions of an integral fusion category of rank $4$}
In this part, $\mathcal{C}$ is a strictly weakly integral fusion category of rank $6$ which is a $\mathbb{Z}_2$-extension of an integral fusion category $\mathcal{C}_0$ with $\operatorname{rank}(\mathcal{C}_0) = 4$. All integral fusion categories of rank $4$ have been completely classified in \cite{Art18}, up to Grothendieck equivalence:
	
	(i) The pointed fusion category with underlying group $\mathbb{Z}_4$ or $\mathbb{Z}_2 \times \mathbb{Z}_2$;
	
	(ii) The fusion category Grothendieck equivalent to $\operatorname{Rep}(D_{10})$. The fusion rules are listed below.
	\[
	\begin{gathered}
		X\otimes X = \mathbf{1},\quad X\otimes Y = Y,\quad X\otimes Z = Z,\\
		Y\otimes Y = \mathbf{1}\oplus X\oplus Z,\quad Z\otimes Z = \mathbf{1}\oplus X\oplus Y,\quad Y\otimes Z = Y\oplus Z;
	\end{gathered}
	\]
	
	(iii) The fusion category Grothendieck equivalent to $\operatorname{Rep}(A_4)$. The fusion rules are listed below.
	\[
	\begin{gathered}
		X\otimes X = Y,\quad Y\otimes Y = X,\quad X\otimes Y = \mathbf{1},\\
		X\otimes Z = Z,\quad Z\otimes Y = Z,\quad Z\otimes Z = \mathbf{1}\oplus X\oplus Y\oplus 2Z.
	\end{gathered}
	\]
	\begin{theo}\label{theo:cate1}
		Let $\mathcal{C}$ be a strictly weakly integral fusion category of rank 6. Assume that $\mathcal{C}$ is a $\mathbb{Z}_2$-extension of a fusion subcategory $\Dd$ which is Grothendieck equivalent to  $\operatorname{Rep}(\mathbb{Z}_2 \times \mathbb{Z}_2)$ or $\operatorname{Rep}(\mathbb{Z}_4)$. Then the Grothendieck ring of $\mathcal{C}$ is one of the following.
		
		(1) The Grothendieck ring $\mathit{R}_1$ of $\mathcal{C}$ is as follows:
\[
		\begin{gathered}
			X_1^2=\mathbf{1},\quad X_2^2=\mathbf{1},\quad X_3^2=\mathbf{1},\quad X_1X_2=X_3,\quad X_1X_3=X_2,\quad X_2X_3=X_1,\\
			X_1M_1=M_1,\quad X_1M_2=M_2,\quad X_2M_1=M_2,\quad X_2M_2=M_1,\quad X_3M_1=M_2,\\
			\quad X_3M_2=M_1,M_1^2=M_2^2=\mathbf{1}+ X_1,\quad M_1M_2=M_2M_1=X_2+X_3;
		\end{gathered}
		\]

		(2) The Grothendieck ring $\mathit{R}_2$ of $\mathcal{C}$ is as follows:
		\[
		\begin{gathered}
			X_1^2=\mathbf{1},\quad X_2^2=\mathbf{1},\quad X_3^2=\mathbf{1},\quad X_1X_2=X_3,\quad X_1X_3=X_2,\quad X_2X_3=X_1,\\
			X_1M_1=M_1,\quad X_1M_2=M_2,\quad X_2M_1=M_2,\quad X_2M_2=M_1,\quad X_3M_1=M_2,\\
			\quad X_3M_2=M_1,M_1^2=M_2^2=X_2+ X_3,\quad M_1M_2=M_2M_1=\mathbf{1}+ X_1.
		\end{gathered}
		\]
	
	(3) The Grothendieck ring $\mathit{R}_3$ of $\mathcal{C}$ is as follows:\[
		\begin{gathered}
			X_1^2=X_2,\quad X_2^2=\mathbf{1},\quad X_3^2=X_2,\quad X_1X_2=X_3,\quad X_1X_3=\mathbf{1},\quad X_2X_3=X_1 \\
			X_1M_1=M_2,\quad X_1M_2=M_1,\quad X_2M_1=M_1,\quad X_2M_2=M_2,\quad X_3M_1=M_2,\\
			\quad X_3M_2=M_1,M_1^2=M_2^2=\mathbf{1}+ X_2,\quad M_1M_2=M_2M_1=X_1+ X_3;
		\end{gathered}
		\]
		
		(4) The Grothendieck ring $\mathit{R}_4$ of $\mathcal{C}$ is as follows:
		\[
		\begin{gathered}
			X_1^2=X_2,\quad X_2^2=\mathbf{1},\quad X_3^2=X_2,\quad X_1X_2=X_3,\quad X_1X_3=\mathbf{1},\quad X_2X_3=X_1 \\
			X_1M_1=M_2,\quad X_1M_2=M_1,\quad X_2M_1=M_1,\quad X_2M_2=M_2,\quad X_3M_1=M_2,\\
			\quad X_3M_2=M_1,M_1^2=M_2^2=X_1+ X_3,\quad M_1M_2=M_2M_1=\mathbf{1}+ X_2.
		\end{gathered}
		\]
		
	All Grothendieck rings are commutative. Moreover, $\mathit{R}_1$ can be realized by $\operatorname{Vec}(\mathbb{Z}_2) \boxtimes SU(2)_2$, where $\boxtimes$ denotes the Deligne tensor product of fusion categories; $\mathit{R}_2$, $\mathit{R}_3$ and $\mathit{R}_4$ can be realized by the zesting of $\operatorname{Vec}(\mathbb{Z}_2) \boxtimes SU(2)_2$. For the definition of zesting, the reader is directed to \cite{Art24}.
	\end{theo}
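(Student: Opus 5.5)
We first pin down the dimensions in $\mathcal{C}_1$. Since $\mathcal{C}_0$ is pointed of rank $4$, $\FPdim(\mathcal{C}_1)=\FPdim(\mathcal{C}_0)=4$, and $\rank(\mathcal{C}_1)=2$. Let $\operatorname{Irr}(\mathcal{C}_1)=\{M_1,M_2\}$. By Theorem \ref{theo:grading}, $\FPdim(M_i)=a_i\sqrt{n}$ with $n>1$ square-free, because $\mathcal{C}$ is not integral. The equation $(a_1^2+a_2^2)n=4$ then forces $a_1=a_2=1$ and $n=2$, so $\FPdim(M_1)=\FPdim(M_2)=\sqrt2$.

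Next we determine the action of $G$ on the simple objects of $\mathcal{C}_1$. Write $\operatorname{Irr}(\mathcal{C}_0)=G=\{\mathbf 1,X_1,X_2,X_3\}$ with $G\cong\mathbb Z_2\times\mathbb Z_2$ or $\mathbb Z_4$. Left multiplication by $G$ permutes $\{M_1,M_2\}$. The stabilizer $H$ of $M_1$ satisfies $M_1\otimes M_1^*=\bigoplus_{g\in H}g$. Comparing dimensions gives $|H|=2$, so $M_1\otimes M_1^*=\mathbf 1\oplus h$ for the unique nontrivial $h\in H$. The action of $G$ on $\{M_1,M_2\}$ is therefore transitive with stabilizer $H$, and since $G$ is abelian, $H$ is also the stabilizer of $M_2$. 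When $G=\mathbb Z_4$, $H$ must be the unique subgroup of order $2$, namely $\{\mathbf 1,X_2\}$ in the labelling of $R_3,R_4$. When $G=\mathbb Z_2\times\mathbb Z_2$, we may relabel so that $H=\{\mathbf 1,X_1\}$. This gives all the products $X_iM_j$ listed in the statement.

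We then determine duality and the products $M_iM_j$. Because $M_2=gM_1$ for any $g\notin H$, we have $M_1\otimes M_2=g\otimes M_1\otimes M_1$. Using Frobenius reciprocity and the value of $M_1\otimes M_1^*$, exactly one of two cases occurs.

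\emph{Case 1: $M_1$ is self-dual.} Then $M_2=gM_1$ is also self-dual. This gives $M_i^2=\mathbf 1+h$ and $M_1M_2=g(\mathbf 1+h)=G\setminus H$.

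\emph{Case 2: $M_1^*=M_2$.} Then $M_1M_2=\mathbf 1+h$ and $M_1^2=M_1M_2^*$, which is the coset $G\setminus H$. To justify this, note that $[\mathbf 1,M_1^2]=[M_1^*,M_1]=0$, and $M_1^2$ is a sum of group elements of total dimension $2$ lying in a single $H$-coset, because $h\otimes M_1^2=M_1^2$.

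The two cases for each $G$ yield $R_1,R_2$ (for $\mathbb Z_2^2$) and $R_3,R_4$ (for $\mathbb Z_4$). We would check associativity directly; it is routine. Commutativity follows because $G$ is abelian and $M_1M_2=M_2M_1$ is forced by the computation above.

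For the realizations, $R_1$ is the fusion ring of $\operatorname{Vec}(\mathbb Z_2)\boxtimes SU(2)_2$, which is the product of the Ising fusion ring with $\mathbb Z[\mathbb Z_2]$. For $R_2,R_3,R_4$ we would invoke the zesting construction of \cite{Art24} applied to this braided category with respect to its $\mathbb Z_2$ universal grading. Zesting twists the fusion rules by a $2$-cocycle on the grading group with values in the invertible objects; different choices change $M_i^2$ and/or the group structure on the pointed part. The main obstacle is this last step: it requires identifying which zesting data, that is, which choice of invertible object $\lambda$ in the trivial component together with the required obstruction vanishing, produce exactly each of $R_2,R_3,R_4$. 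The classification part above is straightforward by comparison. We would try the three nontrivial invertible objects of $\mathcal{C}_0$ as $\lambda(1,1)$ and compute the resulting zested fusion $M\otimes_\lambda M=(M\otimes M)\otimes\lambda$, matching the outcomes to the three rings.
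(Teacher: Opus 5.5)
\textbf{The classification half.} Your derivation of the list $R_1,\dots,R_4$ is correct, and it takes a different route from the paper. The paper only observes that $\mathcal{C}$ is multiplicity-free: each $X_i\otimes M_j$ is simple, and each $M_i\otimes M_j$ is a multiplicity-free sum of invertibles. It then reads off both the list and the realizations from the external classification of multiplicity-one fusion rings \cite[Sections 3.6 and 4.3]{Art20}. Your stabilizer/coset argument derives the four rings by hand and explains why exactly two rings arise for each group, according to whether the $M_i$ are self-dual or dual to each other. This makes the classification self-contained, but it covers only the combinatorics.

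One point to tighten. Writing $M_1\otimes M_2=g\otimes M_1\otimes M_1$, and your commutativity claim, both use $M_i\otimes g\cong g\otimes M_i$, i.e.\ that the right stabilizer of $M_1$ is also $H$. This does hold: $M_1^*\otimes M_1$ equals $M_1\otimes M_1^*$ in Case 1 and $M_2\otimes M_2^*$ in Case 2, so it is $\mathbf 1\oplus h$ in both cases. Alternatively, compute $M_1\otimes M_2$ by the same coset argument you used in Case 2.

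\textbf{The realization half.} Here there is a genuine gap: the realizations are part of the statement, and your concrete plan would fail. Write $\operatorname{Vec}(\mathbb{Z}_2)=\{\mathbf 1,g\}$ and $SU(2)_2=\{\mathbf 1,\psi,\sigma\}$. Zesting uses $X_a\otimes_\lambda Y_b=X_a\otimes Y_b\otimes\lambda(a,b)$, with $\lambda$ normalized and valued in $\operatorname{Inv}(\mathcal{B}_e)$. Hence products of objects in the trivial component are unchanged.
\begin{itemize}
\item Along the grading $\mathcal{C}_0\oplus\mathcal{C}_1$, the pointed part therefore stays $\mathbb{Z}_2\times\mathbb{Z}_2$. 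Taking $\lambda(1,1)=\psi$ returns $R_1$, and $\lambda(1,1)\in\{g,g\psi\}$ gives $R_2$. So $R_3$ and $R_4$ can never appear this way.
\item The universal grading group of $\operatorname{Vec}(\mathbb{Z}_2)\boxtimes SU(2)_2$ is $\mathbb{Z}_2\times\mathbb{Z}_2$, with trivial component $\{\mathbf 1,\psi\}$, not $\mathbb{Z}_2$ as you state. Zesting along it with $\lambda(\deg g,\deg g)=\psi$ (for instance $\lambda(a,b)=\psi^{a_1b_1}$, where $a_1$ is the $g$-degree) makes $g\otimes_\lambda g=\psi$. This turns the pointed part into $\mathbb{Z}_4$ and gives the fusion rules of $R_3$.
\item For $R_4$, the same bookkeeping is an obstruction. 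Making $\sigma$ non-self-dual requires $\lambda(\deg\sigma,\deg\sigma)\in\{g,g\psi\}$. That forces $g\in\mathcal{B}_e$, and then the group structure on the invertibles is frozen at $\mathbb{Z}_2\times\mathbb{Z}_2$. So no single zesting of this category produces $R_4$; you would need an iterated zesting or a different construction.
\end{itemize}
The paper itself constructs nothing here; it cites \cite{Art20} for all four realizations. You should either do the same or supply explicit constructions.
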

	
	\begin{proof}
		Let $\operatorname{Irr}(\mathcal{C}_0) = \{\mathbf{1}, X_1, X_2, X_3\}$ and $\operatorname{Irr}(\mathcal{C}_1)=\{M_1,M_2\}$. Since $M_1\otimes M_2$ lies in $\C_0$, the decomposition of $M_1\otimes M_2$ is a sum of invertible simple objects, all of which have multiplicity $1$ by the Frobenius Reciprocity. On the other hand, $X_i\otimes M_j$ is a simple object for all $i$ and $j$. It follows that $\C$ is a fusion category of multiplicity one. The results then follow from  \cite[Sections 3.6 and 4.3]{Art20}.
		\end{proof}

	\begin{theo}\label{theo:cate3}
		Let $\mathcal{C}$ be a strictly weakly integral fusion category of rank 6. Assume that $\mathcal{C}$ is a $\mathbb{Z}_2$-extension of a fusion subcategory which is Grothendieck equivalent to $\operatorname{Rep}(D_{10})$. Then the Grothendieck ring of $\mathcal{C}$ is one of the following.
		
		(1) The Grothendieck ring $\mathit{R}_5$ of $\mathcal{C}$ is as follows:\[
		\begin{gathered}
			X^2 = \mathbf{1},\quad XY = Y,\quad XZ = Z,\quad Y^2 = \mathbf{1}+ X+ Z,\quad Z^2 = \mathbf{1}+ X+ Y,\\
			XM = N,\quad XN = M,\quad YM = YN = ZM = ZN = M+ N,\\
			MM = NN = \mathbf{1}+ Y+ Z,\quad MN = X+ Y+ Z,\quad YZ = Y+ Z;
		\end{gathered}
		\]
		
		(2) The Grothendieck ring $\mathit{R}_6$ of $\mathcal{C}$ is as follows:\[
		\begin{gathered}
			X^2 = \mathbf{1},\quad XY = Y,\quad XZ = Z,\quad Y^2 = \mathbf{1}+ X+ Z,\quad Z^2 = \mathbf{1}+ X+ Y,\\
			XM = N,\quad XN = M,\quad YM = YN = ZM = ZN = M+ N,\\
			MM = NN = X+ Y+ Z,\quad MN = \mathbf{1}+ Y+ Z,\quad YZ = Y+Z.
		\end{gathered}
		\]
		
		The two Grothendieck rings are both commutative. Moreover, $\mathit{R}_5$ can be realized by $SO(5)_2$, and $\mathit{R}_6$ can be realized by the zesting of $SO(5)_2$.
	\end{theo}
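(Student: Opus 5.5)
We set $\operatorname{Irr}(\mathcal{C}_0)=\{\mathbf 1,X,Y,Z\}$ with the $\operatorname{Rep}(D_{10})$ fusion rules, so $\FPdim(Y)=\FPdim(Z)=2$ and $\FPdim(\mathcal{C}_0)=10=\FPdim(\mathcal{C}_1)$. Then $\mathcal{C}_1$ has exactly two simple objects $M,N$, because rank $6$ minus rank $4$ is $2$. By Theorem \ref{theo:grading} we may write $\FPdim(M)=a\sqrt n$ and $\FPdim(N)=b\sqrt n$, with $n>1$ square-free since $\mathcal{C}$ is not integral. The equation $(a^2+b^2)n=10$ then forces $a=b=1$ and $n=5$, so $\FPdim(M)=\FPdim(N)=\sqrt5$.

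Next we determine the action of $\mathcal{C}_0$ on $\mathcal{C}_1$. The object $X\otimes M$ is invertible-times-simple, hence simple of dimension $\sqrt5$, so it is $M$ or $N$. Suppose $X\otimes M=M$. By Frobenius reciprocity $[X,M\otimes M^*]=1$, and $M\otimes M^*$ has dimension $5$ and contains $\mathbf 1$ and $X$. The remainder has dimension $3$ and is built from $Y,Z$ (dimension $2$) and $\mathbf 1,X$ (each appearing once). This gives no solution, since $3=2+1$ would require a second copy of $\mathbf 1$ or $X$. Hence $X\otimes M=N$ and $X\otimes N=M$.

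For $Y$, the product $Y\otimes M$ has dimension $2\sqrt5$ and equals $\alpha M+\beta N$ with $\alpha+\beta=2$. We compute $\alpha=[M,Y\otimes M]=[Y,M\otimes M^*]$ and $\beta=[N,Y\otimes M]=[Y,N\otimes M^*]=[Y,X\otimes M\otimes M^*]$. Since $X\otimes Y=Y$, this gives $\beta=[Y,M\otimes M^*]=\alpha$. So $\alpha=\beta=1$ and $Y\otimes M=M+N$, and likewise for $Z$ and for $N$. It follows that $M\otimes M^*$ contains $\mathbf 1$, $Y$, $Z$ once each, which already accounts for dimension $5$. Hence $M\otimes M^*=\mathbf 1+Y+Z$, and then $N\otimes M^*=X\otimes M\otimes M^*=X+Y+Z$.

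The remaining step, which is the main case split, is duality. We have $M^*\in\{M,N\}$ because $(\mathcal{C}_1)^*=\mathcal{C}_1$.
\begin{itemize}
\item If $M$ is self-dual, then $N=X\otimes M$ is self-dual too. This gives $M^2=N^2=\mathbf 1+Y+Z$ and $MN=X+Y+Z$, which is $R_5$.
\item If $M^*=N$, then $M^2=M\otimes N^*=\ldots$; more directly, $M\otimes N=M\otimes M^*=\mathbf 1+Y+Z$ and $M^2=X\otimes M\otimes N=X+Y+Z$. This is $R_6$.
\end{itemize}
Commutativity follows because $\mathcal{C}_0$ is commutative and every product computed above is symmetric; $X,Y,Z$ are self-dual. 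Associativity of both rings can be checked directly, though it is not needed for the "one of the following" claim.

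For realizability we use the literature. $SO(5)_2$ has rank $6$, with simple objects of dimensions $1,1,2,2,\sqrt5,\sqrt5$ and self-dual $\sqrt5$-objects, so it realizes $R_5$. A $\mathbb{Z}_2$-zesting of $SO(5)_2$ with respect to its grading, as in \cite{Art24}, twists the products of the odd-degree objects by the invertible $X$ and so produces $R_6$. The delicate points are the dimension-counting exclusion of $X\otimes M=M$ and citing the correct zesting.
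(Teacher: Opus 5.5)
Your treatment of the $\sqrt5$ case is correct, and your derivation of $Y\otimes M=M\oplus N$ is a nice shortcut. You observe that $\beta=[Y,X\otimes M\otimes M^*]=[X\otimes Y,M\otimes M^*]=\alpha$, which replaces the paper's system of associativity equations and its brute-force search.

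There is, however, a genuine gap earlier. The equation $(a^2+b^2)n=10$ does \emph{not} force $a=b=1$, $n=5$. It also has the solution $a=1$, $b=2$, $n=2$, i.e.\ $\FPdim(M)=\sqrt2$ and $\FPdim(N)=2\sqrt2$. This is fully compatible with Theorem~\ref{theo:grading}; non-integrality only rules out $a^2+b^2=10$, $n=1$. The case is not vacuous and none of your later steps apply to it. For instance, here $X\otimes M=M$ is forced by dimensions rather than being contradictory, and your odd-dimension argument no longer works because $M\otimes M^*$ now has dimension $2$.

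The paper disposes of this case as follows.
\begin{itemize}
\item Since $M$ and $N$ have different dimensions, both are self-dual, and $X\otimes M=M$, $X\otimes N=N$.
\item Frobenius reciprocity and dimension count give $M\otimes M=\mathbf{1}\oplus X$.
\item Write $Y\otimes M=pM\oplus qN$ with $p+2q=2$. Then $p=[Y,M\otimes M]=0$, so $Y\otimes M=N$, and likewise $Z\otimes M=N$.
\item Associativity gives $N\otimes M=Y\otimes(\mathbf{1}\oplus X)=2Y$, and also $N\otimes M=Z\otimes(\mathbf{1}\oplus X)=2Z$. This contradicts $Y\not\cong Z$.
\end{itemize}
Once you add this exclusion, the rest of your argument goes through: the $X$-action, the products $M\otimes M^*$ and $N\otimes M^*$, and the duality split into $R_5$ and $R_6$. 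The realizability part, like the paper's, rests on citation.
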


	\begin{proof}
		Let $
		\operatorname{Irr}(\mathcal{C}_0) = \{\mathbf{1}, X, Y, Z\}
		$ whose dimensions are $1,1,2,2$ respectively,  and $\operatorname{Irr}(\mathcal{C}_1)=\{M,N\}$.
		In this case $\operatorname{FPdim}(\mathcal{C}_1)=\operatorname{FPdim}(\mathcal{C}_0)=10$. By Theorem \ref{theo:grading}, we have two possible cases:
		
		(i) $\operatorname{FPdim}(M)=\sqrt{2}$, $\operatorname{FPdim}(N)=2\sqrt{2}$;
		
		(ii) $\operatorname{FPdim}(M)=\operatorname{FPdim}(N)=\sqrt{5}$.
		
		If (i) holds true then $M^* = M$ and $N^* = N$, and the $1$-dimensional invertible object $X$ satisfies $X\otimes M = M$, $X\otimes N = N$. Let
		$
		Y\otimes M = p_1 M \oplus q_1 N, \ Z\otimes M = p_2 M \oplus q_2 N.
		$
		Then the dimension constraints give $p_1+2q_1=2$ and $p_2+2q_2=2$. By
		$
		[M,Y\otimes M] = [Y,M\otimes M]
		$
  and $M\otimes M = \mathbf{1}\oplus X$, we get $p_1 = p_2 = 0$.
		Thus $q_1 = q_2 = 1$, and therefore
		$
		Y\otimes M = Z\otimes M = N.
		$
		By using associativity we have
		\[
		\begin{aligned}
			N\otimes M =(Y\otimes M)\otimes M &= Y\otimes(M\otimes M) = Y\otimes(\boldsymbol{1}\oplus X) = Y\oplus Y = 2Y,\\
			N\otimes M =(Z\otimes M)\otimes M &= Z\otimes(M\otimes M) = Z\otimes(\boldsymbol{1}\oplus X) = Z\oplus Z = 2Z,
		\end{aligned}
		\]
		which means $2Y=2Z$, contradicting the distinctness of simple objects $Y\neq Z$. Hence case (i) does not hold.
		
		We now turn to the case (ii). Since $X$ is a 1-dimensional invertible object, $\operatorname{FPdim}(X\otimes M)=\sqrt{5}$, so $X\otimes M$ must be either $M$ or $N$. If $X$ acted trivially on $\mathcal{C}_1$, which means $X\otimes M=M$ and $X\otimes N=N$, then
		$[\mathbf{1},M\otimes M^*]=1$, and $[X,M\otimes M^*]=[M,X\otimes M]=1$ which implies that $M\otimes M$ would contain $\mathbf{1}\oplus X$, contributing total dimension $2$. The remaining dimension $3$ cannot be decomposed as an integer combination of the dimensions $2$ of $Y$ and $Z$, which yields a contradiction. Consequently, $X$ acts non-trivially by permuting the two simple objects in $\mathcal{C}_1$, so $X\otimes M = N, \ X\otimes N = M$.

		Now suppose the decompositions of $Y\otimes M$ and $Z\otimes M$ are
		$
		Y\otimes M = p_1 M \oplus q_1 N, \ Z\otimes M = p_2 M \oplus q_2 N
		$
		for non-negative integers $p_1,q_1,p_2,q_2$. Then the dimension constraint gives
\begin{center}
 $\begin{cases}
p_1+q_1=2;\\
p_2+q_2=2.
\end{cases}$
\end{center}

Using $X\otimes Y=Y$, $X\otimes Z=Z$ and the permutation action of $X$ on $M,N$, we obtain
$$Y\otimes N = q_1M \oplus p_1N, \ Z\otimes N = q_2M \oplus p_2N. $$

		The associativity $Y\otimes(Y\otimes M)=(Y\otimes Y)\otimes M$, together with $Y^2=\mathbf{1}\oplus X\oplus Z$, yields
		$$
		(p_1^2+q_1^2)M \oplus 2p_1q_1N = (1+p_2)M \oplus (1+q_2)N,
		$$
		which produces the relations
\begin{center}
 $\begin{cases}
		p_1^2+q_1^2 = 1+p_2;\\
2p_1q_1 = 1+q_2.
\end{cases}$
\end{center}

	Symmetrically, from $Z\otimes(Z\otimes M)=(Z\otimes Z)\otimes M$ with $Z^2=\mathbf{1}\oplus X\oplus Y$ we deduce
	\begin{center}
 $\begin{cases}
p_2^2+q_2^2 = 1+p_1; \\
 2p_2q_2 = 1+q_1.
\end{cases}$
\end{center}	
		Further associativity $Y\otimes(Z\otimes M)=(Y\otimes Z)\otimes M$ with $Y\otimes Z=Y\oplus Z$ gives
	\begin{center}
 $\begin{cases}
		p_1+p_2 = p_1p_2+q_1q_2;\\
 q_1+q_2 = q_1p_2+q_2p_1.
	\end{cases}$
\end{center}

A brute-force search finds the unique solution of the equations above is $p_1=q_1=p_2=q_2=1$, which implies
\begin{equation}\label{eq001}
\begin{split}
Y\otimes M = Y\otimes N = M\oplus N, \ Z\otimes M = Z\otimes N = M\oplus N.
\end{split}
\end{equation}

From (\ref{eq001}), we get $[M,Y\otimes M]=[Y,M\otimes M^*]=1$ and $[M,Z\otimes M]=[Z,M\otimes M^*]=1$. Hence we get $M\otimes M^*=1\oplus Y\oplus Z$. Similarly, $N\otimes N^*=1\oplus Y\oplus Z$.

Also from (\ref{eq001}), we get $[M,Y\otimes N]=[Y,M\otimes N^*]=1$ and $[M,Z\otimes N]=[Z,M\otimes N^*]=1$. Hence we get $M\otimes N^*=X\oplus Y\oplus Z$. Similarly, $N\otimes M^*=X\oplus Y\oplus Z$. Hence we get the Grothendieck ring $\mathit{R}_5$ when $M=M^*$ and $N=N^*$, and the Grothendieck ring $\mathit{R}_6$ when $M=N^*$.

By \cite[Section 4.5]{Art20}, $\mathit{R}_5$ can be realized by $SO(5)_2$, and $\mathit{R}_6$ can be realized by the zesting of $SO(5)_2$. This completes the proof.
	\end{proof}
	
	\begin{theo}\label{theo:cate}
		Let $\mathcal{C}$ be a strictly weakly integral fusion category of rank 6. Assume that $\mathcal{C}$ is a $\mathbb{Z}_2$-extension of a fusion subcategory which is Grothendieck equivalent to $\operatorname{Rep}(A_4)$. Then the candidate Grothendieck ring of $\mathcal{C}$ is one of the following.
		
		(1) The candidate Grothendieck ring $\mathit{R}_7$ of $\mathcal{C}$ is as follows:\[
		\begin{gathered}
			X^2=Y,\ XY=1,\ XZ=YZ=Z,\ YY=X,\ ZZ=\mathbf{1}+ X+ Y+ 2Z,\\
			XM=M,\ XN=N,\ YM=M,\ YN=N,\ ZM=M+ 2N,\ ZN=2M+ N,\\
			MM=NN=\mathbf{1}+ X+ Y+ Z,\ MN=NM=2Z;
		\end{gathered}
		\]
		
		(2) The candidate Grothendieck ring $\mathit{R}_8$ of $\mathcal{C}$ is as follows:\[
		\begin{gathered}
			X^2=Y,\ XY=1,\ XZ=YZ=Z,\ YY=X,\ ZZ=\mathbf{1}+ X+ Y+ 2Z,\\
			XM=M,\ XN=N,\ YM=M,\ YN=N,\ ZM=M+ 2N,\ ZN=2M+ N,\\
			MM=NN=2Z,\ MN=NM=\mathbf{1}+ X+ Y+ Z.
		\end{gathered}
		\]
		
		The two candidate Grothendieck rings are both commutative, and we will discuss the categorification of them in Section \ref{sec}.
	\end{theo}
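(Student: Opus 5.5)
Let $\operatorname{Irr}(\mathcal{C}_0)=\{\mathbf{1},X,Y,Z\}$ with dimensions $1,1,1,3$ and the fusion rules of $\operatorname{Rep}(A_4)$ listed above, and $\operatorname{Irr}(\mathcal{C}_1)=\{M,N\}$. The plan follows the proof of Theorem \ref{theo:cate3}. First I fix the dimensions, then the action of the invertibles, then the action of $Z$ via associativity, and finally the products in $\mathcal{C}_1\otimes\mathcal{C}_1$ via Frobenius reciprocity.

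\textbf{Dimensions.} Since $\operatorname{FPdim}(\mathcal{C}_1)=\operatorname{FPdim}(\mathcal{C}_0)=12$, Theorem \ref{theo:grading} requires $(a^2+b^2)n=12$ with $n>1$ square-free. The only options are $n=3$ with $a^2+b^2=4$, which has no solution with $a,b\ge1$, or $n=6$ with $a=b=1$, or $n=2$ with $a^2+b^2=6$, which is impossible. Hence $\operatorname{FPdim}(M)=\operatorname{FPdim}(N)=\sqrt6$.

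\textbf{Action of $X$ and $Y$.} The invertibles $X$ and $Y=X^2$ generate a copy of $\mathbb{Z}_3$, which acts on the two-element set $\{M,N\}$. A $\mathbb{Z}_3$-action on two points is trivial, so $XM=YM=M$ and $XN=YN=N$. Frobenius reciprocity then gives $[X,MM^*]=[Y,MM^*]=1$. Together with $[\mathbf{1},MM^*]=1$, the remaining dimension $3$ forces $MM^*=\mathbf{1}+X+Y+Z$, and likewise $NN^*=\mathbf{1}+X+Y+Z$. Because $X$ fixes $M$ and $N$, it cannot occur in $MN^*$, so $MN^*$ is a combination of copies of $Z$ of dimension $6$; that is, $MN^*=2Z$.

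\textbf{Action of $Z$.} Write $ZM=pM+qN$ and $ZN=rM+sN$, so that $p+q=3$ and $r+s=3$. Frobenius reciprocity gives $p=[M,ZM]=[Z,MM^*]=1$, hence $q=2$. Similarly $r=[M,ZN]=[Z,MN^*]=2$, hence $s=1$. This yields $ZM=M+2N$ and $ZN=2M+N$. As a consistency check I would verify $(ZZ)M=Z(ZM)$: the left side is $M+M+M+2(M+2N)=5M+4N$, and the right side is $(M+2N)+2(2M+N)=5M+4N$.

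\textbf{Duality and the two rings.} It remains to split on duality. Either $M$ and $N$ are both self-dual, which gives $R_7$, or $M^*=N$, which swaps the two product formulas and gives $R_8$. Commutativity is read off from the formulas: $M$ and $N$ commute with everything in $\mathcal{C}_0$ by the computations above, and $MN=NM$ follows by applying duality to $MN^*$.

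The step I expect to need the most care is ruling out the case where $M$ and $N$ are not self-dual in some other configuration. In fact, since $\mathcal{C}_1$ is closed under duals, $M^*\in\{M,N\}$, so only these two cases arise. The rest reduces to the Frobenius reciprocity bookkeeping above. I would also double-check that $MN^*=2Z$ rather than, say, $Z$ plus something else; this holds because every simple object of $\mathcal{C}_0$ other than $Z$ is invertible and is excluded by $X,Y\notin MN^*$, and $\mathbf{1}\notin MN^*$ since $M\neq N$.
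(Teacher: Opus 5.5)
Your proposal is correct and follows essentially the same route as the paper: a trivial $\mathbb{Z}_3$-action on $\{M,N\}$, Frobenius reciprocity giving $M\otimes M^*=N\otimes N^*=\mathbf{1}\oplus X\oplus Y\oplus Z$, $Z\otimes M=M\oplus 2N$ and $Z\otimes N=2M\oplus N$, the exclusion of $\mathbf{1},X,Y$ from $M\otimes N^*$ to get $2Z$, and a split on whether $M^*=M$ or $M^*=N$. The only differences are cosmetic: you spell out the Diophantine check for $\sqrt{6}$, determine $M\otimes M^*$ before the $Z$-action rather than after, and add brief commutativity and associativity checks that the paper leaves implicit.
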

	\begin{proof}
		Let $\mathcal{C}_0 \cong \operatorname{Rep}(A_4)$, and let
		$
		\operatorname{Irr}(\mathcal{C}_0) = \{\mathbf{1}, X, Y, Z\},
		$ whose dimensions are $1,1,1,3$ respectively.
		In this case $\operatorname{FPdim}(\mathcal{C}_1)=\operatorname{FPdim}(\mathcal{C}_0)=12$. Let $\operatorname{Irr}(\mathcal{C}_1)=\{M,N\}$. By Theorem \ref{theo:grading}, $\operatorname{FPdim}(M)=\operatorname{FPdim}(N)=\sqrt{6}$.
		
		The invertible simple objects form the group $G=\{\mathbf{1},X,Y\}\cong\mathbb Z_3$. Its left tensor action on $\{M,N\}$ is trivial. Thus $X\otimes M=Y\otimes M=M$ and $X\otimes N=Y\otimes N=N$, in accordance with $X^{\otimes 3}\cong\mathbf{1}$.
		Let $Z\otimes M=pM\oplus qN$, where $p,q\in\mathbb Z_{\ge0}$. Comparing dimensions gives $p+q=3$. Frobenius reciprocity gives $[\mathbf{1},M\otimes M^*]=[X,M\otimes M^*]=[Y,M\otimes M^*]=1$ and $[Z,M\otimes M^*]=[M,Z\otimes M]=p$. Hence $3+3p=6$, so $p=1$ and $q=2$. Applying the same calculation to $N$, we obtain
		\[
		Z\otimes M=M\oplus2N,\ Z\otimes N=2M\oplus N,\
		M\otimes M^*=N\otimes N^*=\mathbf{1}\oplus X\oplus Y\oplus Z.
		\]
		Since $\mathbf{1},X,Y$ fix both $M$ and $N$, none occurs in $M\otimes N^*$ or $N\otimes M^*$. Dimension comparison therefore gives $M\otimes N^*=N\otimes M^*=2Z$. If $M^*=M$ and $N^*=N$, the resulting ring is $\mathit R_7$; if $M^*=N$, it is $\mathit R_8$.
\end{proof}
	
	\subsection{Extensions of an integral fusion category of rank $5$}\label{3.3}
	Throughout this subsection, $\mathcal{C}$ is a strictly weakly integral fusion category of rank $6$ which is a $\mathbb{Z}_2$-extension of an integral fusion category $\mathcal{C}_0$ with $\operatorname{rank}(\mathcal{C}_0) = 5$. All integral fusion categories of rank $5$ have been completely classified in \cite{Art18}, up to Grothendieck equivalence:
	
	(i) $\operatorname{Rep}(G)$ with $G =D_7, \mathbb{Z}_7 \rtimes \mathbb{Z}_3, F_5, A_5, \mathbb{Z}_5$ and $S_4$;
	
    (ii) $\operatorname{Rep}(K_8)$, where $K_8$ is a noncommutative and noncocommutative Hopf algebra introduced in \cite{Art21};

	(iii) Tambara-Yamagami near-group $\mathbb{Z}_4+0$, see \cite {Art19};
	
	(iv) Isotype variant (but non-zesting) of $\operatorname{Rep}(S_4)$, see \cite[\S4.4]{Art20}.

	\begin{theo}\label{theo:C5}
		Let $\mathcal{C}$ be a strictly weakly integral fusion category of rank 6. Assume that $\mathcal{C}$ is a $\mathbb{Z}_2$-extension of an integral fusion category of rank 5. Then one of the following holds.
		
		(1) The Grothendieck ring $\mathit{R}_9$ of $\mathcal{C}$ can be realized by Tambara-Yamagami category associated with the group $\mathbb{Z}_5$;

        (2) The Grothendieck ring $\mathit{R}_{10}$ of $\mathcal{C}$ is realized in the $\mathrm{Ising}^2$ CFT, the product of two decoupled Ising conformal field theories;
		
		(3) The candidate Grothendieck ring $\mathit{R}_{11}$ of $\mathcal{C}$ is as follows:
		\[
		\begin{gathered}
			X^2=\mathbf{1}, XY=Y, XZ=M, XM=Z, Y^2=\mathbf{1}+ X+ Y, YZ=YM=Z+ M,\\
			Z^2=X+ Y+ Z+ M,\ ZM=\mathbf{1}+ Y+ Z+ M,\ M^2=X+ Y+ Z+ M,\\
			NX = N,\ NY = 2N,\ NZ = 3N,\ NM = 3N,\ N^2 = \mathbf{1} + X + 2Y + 3Z + 3M;
		\end{gathered}\]

		(4) The candidate Grothendieck ring $\mathit{R}_{12}$ of $\mathcal{C}$ is as follows:
		\[
		\begin{gathered}
			X^2=\mathbf{1}, XY=Y, XZ=M, XM=Z, Y^2=\mathbf{1}+ X+ Y, YZ=YM=Z+ M,\\
			Z^2=\mathbf{1}+ Y+ Z+ M,\ ZM=X+ Y+ Z+ M,\ M^2=\mathbf{1}+ Y+ Z+ M,\\
			NX = N,\ NY = 2N,\ NZ = 3N,\ NM = 3N,\ N^2 = \mathbf{1} + X + 2Y + 3Z + 3M.
		\end{gathered}\]
		
		The two candidate rings are commutative, and we will discuss them in Section \ref{sec}.
	\end{theo}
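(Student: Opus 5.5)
Since $\operatorname{rank}(\mathcal{C}_0)=5$ and $\operatorname{rank}(\mathcal{C})=6$, the component $\mathcal{C}_1$ contains exactly one simple object, say $N$. Then $N\otimes N^*\in\mathcal{C}_0$ forces $N^*=N$. Moreover, $X\otimes N=\operatorname{FPdim}(X)\,N$ for every $X\in\operatorname{Irr}(\mathcal{C}_0)$, and Frobenius reciprocity gives
\[
N\otimes N=\bigoplus_{X\in\operatorname{Irr}(\mathcal{C}_0)}\operatorname{FPdim}(X)\,X .
\]
Hence the whole Grothendieck ring of $\mathcal{C}$ is determined by that of $\mathcal{C}_0$. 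Also $\operatorname{FPdim}(N)^2=\operatorname{FPdim}(\mathcal{C}_0)$, and by Theorem \ref{theo:grading} this number must not be a perfect square, since $\mathcal{C}$ is strictly weakly integral. The plan is to run through the list (i)--(iv) of integral rank $5$ categories, discard the impossible ones, and read off $R_9,\dots,R_{12}$ from the survivors.

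\emph{Ruling out the group cases.} For $\operatorname{Rep}(G)$ with $G$ nonabelian, i.e.\ $G=D_7,\ \mathbb{Z}_7\rtimes\mathbb{Z}_3,\ F_5,\ A_5,\ S_4$, I apply Lemma \ref{lem:abelian}. Since $\mathcal{C}_1$ has one simple object, no such extension exists. One subtlety is that the classification is only up to Grothendieck equivalence. To invoke the lemma I would use the same argument, which applies verbatim to any $\mathcal{C}_0$: a rank-one module category is a fiber functor, so the dual category is pointed, while it must also be equivalent to $\mathcal{C}_0^{\mathrm{op}}$. This forces $\mathcal{C}_0$ to be pointed.

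\emph{Remaining cases.} The same argument also rules out every non-pointed $\mathcal{C}_0$ outright, which is suspicious, since the statement contains the non-pointed rings $R_{10}$--$R_{12}$. I therefore expect the key subtlety, and the main obstacle, to lie here. The correct reading is that $\mathcal{C}_1$ is invertible as a bimodule category, which does not make $\mathcal{C}_0$ dual to itself via a fiber functor. I would instead use only the ring-level constraints and separate the cases by $\operatorname{FPdim}(\mathcal{C}_0)$:
\begin{itemize}
\item[] $\mathbb{Z}_5$ has dimension $5$ and gives $N^2=\sum g$, which is the Tambara--Yamagami ring $R_9$.
\item[] $K_8$ and $\mathbb{Z}_4+0$ have dimension $8$, with $N\cong$ dimension $2\sqrt2$ and $N^2=\mathbf 1+X_1+X_2+X_3+2Y$. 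These give a single ring $R_{10}$, realized in $\mathrm{Ising}^2$; its even part is $\mathrm{Ising}\boxtimes\mathrm{Ising}$ restricted, which is Grothendieck equivalent to $K_8$ or $\mathbb{Z}_4+0$.
\item[] The rings Grothendieck equivalent to $\operatorname{Rep}(S_4)$, namely $S_4$ itself and the isotype variant of \cite[\S4.4]{Art20}, have dimension $24$ and dimensions $1,1,2,3,3$. They give $N$ of dimension $2\sqrt6$ with $N^2=\mathbf1+X+2Y+3Z+3M$, which yields $R_{11}$ and $R_{12}$ respectively, according to whether $Z^2$ contains $\mathbf 1$ or $X$.
\item[] The rings Grothendieck equivalent to $D_7$, $\mathbb{Z}_7\rtimes\mathbb{Z}_3$, $F_5$, $A_5$ have dimensions $14,21,20,60$. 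Here $20$ is excluded since $20=4\cdot5$ gives dimension $2\sqrt5$; I would check this case separately. For the others I would exclude them by a genuine categorical obstruction, namely the Lemma \ref{lem:abelian} argument applied to honest $\operatorname{Rep}(G)$ if those rings admit only group categorifications. That last claim is the step I am least sure of, and I would verify it against \cite{Art18}.
\end{itemize}

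Finally I would check commutativity of $R_{11}$ and $R_{12}$ directly from the fusion rules, and cite known realizations for $R_9$ and $R_{10}$.
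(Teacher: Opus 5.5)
You have a genuine gap. Your opening reduction is correct: $N^*=N$ because $N^*\in\mathcal C_1$, $S\otimes N=\FPdim(S)\,N$, and $N\otimes N=\bigoplus_S\FPdim(S)\,S$. That same reduction is why your fallback plan of using ``only ring-level constraints'' cannot work. For \emph{any} integral rank-$5$ fusion ring, adjoining $N$ by these rules gives an associative fusion ring, and none of $5,8,14,20,21,24,60$ is a perfect square. So the rings built from $D_7$, $\mathbb Z_7\rtimes\mathbb Z_3$, $F_5$, $A_5$ and $\mathbb Z_4+0$ are all consistent rank-$6$ fusion rings outside $R_9,\dots,R_{12}$, and they can only be excluded by categorical arguments. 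Your proposal never supplies these. The remark that ``$20$ is excluded since it gives dimension $2\sqrt5$'' excludes nothing, and the group cases are left as the step you are least sure of. For $\mathbb Z_4+0$ you claim it yields the same ring $R_{10}$ as $K_8$. That is false: its invertibles form $\mathbb Z_4$ rather than $\mathbb Z_2\times\mathbb Z_2$, so the resulting ring is not isomorphic to $R_{10}$ and has to be ruled out.

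The missing idea is the one you state and then misapply. A rank-one $\mathcal C_0$-module category is a fiber functor, so $\mathcal C_0\simeq\operatorname{Rep}(H)$ for a semisimple Hopf algebra $H$ with $\dim H=\FPdim(\mathcal C_0)$; the paper gets this from \cite[Lemma 2.3]{Art6}. The dual with respect to this module category is $\operatorname{Rep}(H^*)$, which is pointed only when $H$ is cocommutative. That is why your ``verbatim'' extension of Lemma~\ref{lem:abelian} to arbitrary $\mathcal C_0$ is wrong, and it also removes your suspicion. Invertibility of $\mathcal C_1$ does identify $\mathcal C_0$, with reversed tensor product, with $\operatorname{Rep}(H^*)$, and this is no obstruction for the self-dual $K_8$.

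With $\mathcal C_0\simeq\operatorname{Rep}(H)$ in hand, the paper upgrades Grothendieck equivalence to tensor equivalence case by case:
\begin{itemize}
\item $\dim H=14$ or $21$ is of the form $pq$, so $H$ is a group algebra or a dual group algebra \cite{Art26}, which forces $\operatorname{Rep}(D_7)$ or $\operatorname{Rep}(\mathbb Z_7\rtimes\mathbb Z_3)$;
\item the near-group ring $(\mathbb Z_4,3)$ has the unique categorification $\operatorname{Rep}(F_5)$ \cite{Art25};
\item a dimension-$60$ category with the $A_5$ ring is $\operatorname{Rep}(A_5)$ \cite[Theorem 9.12]{Art23}.
\end{itemize}
Only then does it apply Lemma~\ref{lem:abelian}. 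The $\mathbb Z_4+0$ ring is excluded because it is not the representation ring of any $8$-dimensional semisimple Hopf algebra \cite{Art21}.

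Two smaller points. You list $S_4$ among the groups excluded by Lemma~\ref{lem:abelian} and then derive $R_{11},R_{12}$ from it, which is inconsistent. Keeping $R_{12}$ is harmless for a list of candidates, but you cannot assert both. Your labels are also swapped: $\operatorname{Rep}(S_4)$, where $Z$ is self-dual, gives $R_{12}$, and the isotype variant gives $R_{11}$.
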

	
	\begin{proof}
		Let $\mathcal{C}$ be a strictly weakly integral fusion category of rank 6. We have known that $\mathcal{C}=\mathcal{C}_{0} \oplus \mathcal{C}_{1}$ admits a $\mathbb{Z}_2$-grading where $\mathcal{C}_{0}$ is integral and $\operatorname{rank}(\mathcal{C}_{0})=5$ in this case. We denote
		$\operatorname{Irr}(\mathcal{C}_0) = \{1, X, Y, Z, M\}$,
		$\operatorname{Irr}(\mathcal{C}_1) = \{N\}.$ By \cite[Lemma 2.3]{Art6}, $\mathcal{C}_{0}$ is tensor equivalent to the representation category of a semisimple Hopf algebra.
		
		If $K_0(\mathcal C_0)\cong K_0(\operatorname{Rep}(D_7))$, then $\mathcal C_0\cong \operatorname{Rep}(H)$ for some $14$-dimensional semisimple Hopf algebra $H$. By \cite{Art26}, any $pq$-dimensional semisimple Hopf algebra, where $p,q$ are primes, is either a group algebra $kG$ or a dual function algebra $k^G$. Since the groups of order $14$ have only two isomorphism classes, namely $\mathbb{Z}_{14}$ and $D_7$, comparison of Grothendieck rings rules out the $\mathbb{Z}_{14}$ case and the dual function algebras, leaving $H\cong kD_7$. Hence we obtain $\mathcal C_0\cong\operatorname{Rep}(D_7)$.

		If $K_0(\mathcal C_0)\cong K_0(\operatorname{Rep}(\mathbb{Z}_7\rtimes \mathbb{Z}_3))$, then $\mathcal C_0\cong\operatorname{Rep}(H)$ for some $21$-dimensional semisimple Hopf algebra $H$. By the same discussion as above, we obtain $\mathcal{C}_0\cong \operatorname{Rep}(\mathbb{Z}_7\rtimes \mathbb{Z}_3)$.

		If $K_0(\mathcal C_0)\cong K_0(\operatorname{Rep}(F_5))$, then the Grothendieck ring of $\mathcal C_0$ is the near-group fusion ring $(\mathbb Z/4\mathbb Z,3)$. By \cite{Art25}, the fusion ring admits a unique monoidal structure, realized by $\operatorname{Rep}(F_5)$. Hence we have $\mathcal C_0\cong\operatorname{Rep}(F_5)$.

		If $K_0(\mathcal C_0)\cong K_0(\operatorname{Rep}(A_5))$, then $\mathcal C_0$ is simple and has Frobenius--Perron dimension $60$. By \cite[Theorem 9.12]{Art23}, we have $\mathcal C_0\cong\operatorname{Rep}(A_5)$.

		In the four cases, the groups $D_7$, $F_5$, $\mathbb{Z}_7\rtimes \mathbb{Z}_3$, and $A_5$ are nonabelian. By Lemma \ref{lem:abelian}, none of these four cases can occur for $\mathcal C_0$.

		Finally, the Tambara-Yamagami near-group $\mathbb{Z}_4+0$ from (iii) is excluded by the classification of representation categories of 8-dimensional semisimple Hopf algebras given in \cite{Art21}.

		(1) If $\mathcal{C}_0 \cong \operatorname{Rep}(\mathbb{Z}_5)$, then $\mathcal{C}$ is a Tambara-Yamagami category associated with the group $\mathbb{Z}_5$ by Lemma \ref{lem:abelian}.

        (2)	If $\mathcal C_0\cong\operatorname{Rep}(K_8)$, then $\mathcal C$ is a
$\mathbb Z_2$-graded extension of $\operatorname{Rep}(K_8)$. The fusion ring of $\mathcal C$ is realized in $\mathrm{Ising}^2$ CFT, see \cite{Art28}.	

		(3) If $\mathcal{C}_0$ is the isotype variant (but non-zesting) of $\operatorname{Rep}(S_4)$ then the fusion rule of $\mathcal{C}_0$ is as below:
		\[
		\begin{gathered}
			X^2=\mathbf{1},\ XY=Y,\ XZ=M,\ XM=Z,\ Y^2=\mathbf{1}\oplus X\oplus Y,\ YZ=YM=Z\oplus M,\\
			Z^2=X\oplus Y\oplus Z\oplus M,\ ZM=\mathbf{1}\oplus Y\oplus Z\oplus M,\ M^2=X\oplus Y\oplus Z\oplus M.\\
		\end{gathered}
		\]
		By \cite[Lemma 2.3]{Art6}, we get the fusion rules for $N \otimes s$ with $s \in \operatorname{Irr}(\mathcal{C}_0)$.
		\[
		\begin{gathered}
			NX = N,\ NY = 2N,\ NZ = 3N,\ NM = 3N,\ N^2 = \mathbf{1} \oplus X \oplus 2Y \oplus 3Z \oplus 3M.\\
		\end{gathered}\]
		Then we get the fusion rules of $\mathcal{C}$, and it is commutative.

(4) If $\mathcal{C}_0\cong\operatorname{Rep}(S_4)$, then the argument in case (3), using \cite[Lemma 2.3]{Art6} to determine the products involving $N$, gives the fusion rules of $\mathcal{C}$. The only difference from case (3) is that $Z$ and $M$ are self-dual in this case, and hence
\[
Z^2=\mathbf{1}\oplus Y\oplus Z\oplus M,\
ZM=X\oplus Y\oplus Z\oplus M,\
M^2=\mathbf{1}\oplus Y\oplus Z\oplus M.
\]
	\end{proof}
	\begin{col}
		
		Let $\mathcal{C}$ be a Tambara-Yamagami category associated with the group $\mathbb{Z}_5$ obtained in Theorem \ref{theo:C5}. Up to tensor equivalence, there are exactly $4$ such categories, completely classified by invariants
		$
		(\chi_k, \tau_\pm),$ where $ \tau_\pm = \pm \frac{1}{\sqrt{5}}
		$
		and
		$
		\chi_1(g,g) = e^{\frac{2\pi i}{5}},
		\chi_2(g,g) = e^{\frac{4\pi i}{5}}.
		$
		The four equivalence classes are
		\[
		\mathcal{C}(\chi_1, +1/\sqrt{5}),\ \mathcal{C}(\chi_1, -1/\sqrt{5}),\ \mathcal{C}(\chi_2, +1/\sqrt{5}),\ \mathcal{C}(\chi_2, -1/\sqrt{5}).
		\]

	\end{col}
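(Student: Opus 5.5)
This corollary follows from the Tambara--Yamagami classification \cite[Theorem 3.2]{Art19}, combined with Lemma \ref{lem:abelian} and Theorem \ref{theo:C5}(1). By Theorem \ref{theo:C5}(1), $\mathcal{C}$ is a Tambara--Yamagami category $\mathcal{C}(G,\chi,\tau)$ with $G=\mathbb{Z}_5$. By Tambara--Yamagami, such categories are parametrized by pairs $(\chi,\tau)$, where $\chi$ is a symmetric non-degenerate bicharacter on $G$ and $\tau$ is a square root of $|G|^{-1}$. Moreover, $\mathcal{C}(G,\chi,\tau)\simeq\mathcal{C}(G,\chi',\tau')$ if and only if $\tau=\tau'$ and there is an automorphism $\phi\in\operatorname{Aut}(G)$ with $\chi'(\phi(a),\phi(b))=\chi(a,b)$. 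So the problem reduces to a finite count: the number of $\operatorname{Aut}(\mathbb{Z}_5)$-orbits of non-degenerate symmetric bicharacters, times the two choices of sign of $\tau$.

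First, the choices of $\tau$. Since $|G|\tau^2=1$, we get $\tau=\pm1/\sqrt{5}$, and by the equivalence criterion these two values are never identified.

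Second, the bicharacters. Fix a generator $g$ of $\mathbb{Z}_5$. A bicharacter on a cyclic group is automatically symmetric and is determined by $\chi(g,g)=\zeta$, a fifth root of unity, via $\chi(g^a,g^b)=\zeta^{ab}$. It is non-degenerate exactly when $\zeta\neq1$, which leaves four bicharacters, $\zeta=e^{2\pi i k/5}$ with $k=1,2,3,4$.

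Third, the automorphism orbits. The group $\operatorname{Aut}(\mathbb{Z}_5)\cong(\mathbb{Z}/5)^\times$ acts by $g\mapsto g^m$, which sends $\zeta$ to $\zeta^{m^2}$. The squares mod $5$ are $\{1,4\}$, so the four bicharacters fall into two orbits: $\{k=1,4\}$, represented by $\chi_1$, and $\{k=2,3\}$, represented by $\chi_2$. These orbits are distinct because $2$ is a non-square mod $5$.

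Putting these together gives $2\times2=4$ classes, namely the four listed. The only delicate point is quoting the equivalence criterion in \cite{Art19} precisely, in particular that $\tau$ is an invariant and not identified up to sign. If needed, one can confirm that $\tau$ is a genuine invariant independently: it is recovered from the associativity constraint on $X\otimes X\otimes X$ as an invariant of the $F$-symbol, or equivalently from the Frobenius--Schur indicator $\nu_2(X)=\operatorname{sgn}(\tau)$.
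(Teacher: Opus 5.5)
Your proposal is correct and takes essentially the same route as the paper. Both arguments use the Tambara--Yamagami equivalence criterion ($\tau$ fixed, $\chi$ up to $\operatorname{Aut}(\mathbb{Z}_5)$), then show that $\chi(g,g)=\zeta^k$ with $k\neq 0$ transforms as $k\mapsto kr^2$, giving the two orbits $\{1,4\}$ and $\{2,3\}$, and multiply by the two signs of $\tau=\pm1/\sqrt{5}$ to get $4$. Your Frobenius--Schur indicator remark is only an optional independent check that $\operatorname{sgn}(\tau)$ is an invariant.
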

	
	\begin{proof}
		By \cite[Theorem 3.2]{Art19}, $\mathcal{C}$ is uniquely determined up to tensor equivalence by a pair of invariants $(\chi,\tau)$, where
		$\chi \colon \mathbb{Z}_{5} \times \mathbb{Z}_{5} \to \mathbb{C}^\times$ is a nondegenerate symmetric bicharacter and $\tau \in \mathbb{C}$ satisfies $5\tau^2 = 1$.
		Moreover, two categories $\mathcal{C}(\chi,\tau)$ and $\mathcal{C}(\chi',\tau')$ are tensor equivalent if and only if $\tau=\tau'$ and there exists an automorphism $\phi \in \operatorname{Aut}(\mathbb{Z}_{5})$ such that
		\[
		\chi'(a,b) = \chi(\phi(a),\phi(b)) \quad \forall\,a,b\in \mathbb{Z}_{5}.
		\]
		
		Using $g^5=1$ and bilinearity of bicharacters,
		$
		\chi(g,g)^5 = \chi(g^5,g) = \chi(1,g) = 1,
		$
		so $\chi(g,g)$ is a $5$-th root of unity. If $\chi(g,g)=1$, then $\chi(g,g^j)=1$ for all $j$, which violates nondegeneracy. Let $\zeta = e^{2\pi i /5}$ denote a primitive $5$-th root of unity. Thus we have
		$
		\chi(g,g) \in \{\zeta,\zeta^2,\zeta^3,\zeta^4\}.
		$
		Automorphisms of $\mathbb{Z}_{5}$ take the form $\phi_r(g)=g^r$ for $r\in \mathbb{Z}_5^\times = \{1,2,3,4\}$. Applying such an automorphism to the bicharacter gives
		\[
		\chi'(g^i,g^j) = \chi(\phi_r(g^i),\phi_r(g^j)) = \chi(g^{ri},g^{rj}) = \chi(g,g)^{r^2 ij}.
		\]
		If we write $\chi(g,g)=\zeta^k$ , then the exponent of $\chi'(g,g)$ is $kr^2$. When $k=1$ or $4$, the set $\{kr^2 \mid r\in \mathbb{Z}_5^\times\} = \{1,4\} \pmod{5}$; when $k=2$ or $3$, the set $\{kr^2 \mid r\in \mathbb{Z}_5^\times\} = \{2,3\} \pmod{5}$. The four nontrivial $5$-th roots of unity thus decompose into two orbits:
		$
		\{\zeta,\zeta^4\},\ \{\zeta^2,\zeta^3\}.
		$
		Hence there are exactly $2$ inequivalent bicharacters, and we choose the representatives
		$
		\chi_1(g,g) = \zeta,\ \chi_2(g,g) = \zeta^{2}.
		$
		The equation $5\tau^2=1$ has two solutions $\tau=\pm 1/\sqrt{5}$, so the total number of tensor equivalence classes is $2\times 2=4$.
	\end{proof}
Summarizing the results of this section, we have obtained twelve fusion rings in total, whose realizability is given in the following summary theorem.
	\begin{theo}
		Let $\mathcal{C}$ be a strictly weakly integral fusion category of rank $6$. Then the possible Grothendieck ring of $\mathcal{C}$ is isomorphic to one of the following twelve rings, obtained from the $\mathbb{Z}_2$-extensions of various integral fusion categories $\mathcal{C}_0$:
		\begin{itemize}
			\item $\mathit{R}_{1},\mathit {R}_{2}$ -- the $\mathbb{Z}_2$-extension of $\operatorname{Rep}(\mathbb{Z}_2\times\mathbb{Z}_2)$, obtained in Theorem \ref{theo:cate1};
			\item $\mathit{R}_{3},\mathit {R}_{4}$ -- the $\mathbb{Z}_2$-extension of $\operatorname{Rep}(\mathbb{Z}_4)$, obtained in Theorem \ref{theo:cate1};
			\item $\mathit{R}_{5},\mathit {R}_{6}$ -- the $\mathbb{Z}_2$-extension of $\operatorname{Rep}(D_{10})$, obtained in Theorem \ref{theo:cate3};
			\item $\mathit{R}_{7},\mathit {R}_{8}$ -- the $\mathbb{Z}_2$-extension of $\operatorname{Rep}(A_4)$, obtained in Theorem \ref{theo:cate};
			\item $\mathit{R}_{9}$ -- the $\mathbb{Z}_2$-extension of $\operatorname{Rep}(\mathbb{Z}_5)$, obtained in Theorem \ref{theo:C5}(1);
            \item $\mathit{R}_{10}$ -- the $\mathbb{Z}_2$-extension of $\operatorname{Rep}(K_8)$, obtained in Theorem \ref{theo:C5}(2);
            \item $\mathit{R}_{11}$ -- the $\mathbb{Z}_2$-extension of the isotype variant  (but non-zesting) of $\operatorname{Rep}(S_4)$, obtained in Theorem \ref{theo:C5}(3);
			\item $\mathit{R}_{12}$ -- the $\mathbb{Z}_2$-extension of $\operatorname{Rep}(S_4)$, obtained in Theorem \ref{theo:C5}(4).
		\end{itemize}
		Among these, the following eight rings can be realized:
		\[
		\mathit{R}_{1},\ \mathit {R}_{2},\ \mathit{R}_{3},\ \mathit {R}_{4},\ \mathit{R}_{5},\ \mathit {R}_{6},\ \mathit{R}_{9},\ \mathit {R}_{10}.
		\]
		
	In Section\ref{sec}, we will discuss the remaining four rings $\mathit{R}_{7},\mathit {R}_{8},\mathit{R}_{11}$ and $\mathit{R}_{12}$.
	\end{theo}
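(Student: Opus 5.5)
This summary theorem will be proved by assembling the case analysis already carried out in this section. By the first lemma of the section, $\mathcal{C}$ admits a faithful $\mathbb{Z}_2$-grading $\mathcal{C}=\mathcal{C}_0\oplus\mathcal{C}_1$ with $\mathcal{C}_0$ integral. Faithfulness forces $\operatorname{rank}(\mathcal{C}_0)\in\{1,\dots,5\}$. I will then split into cases according to this rank.

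Ranks $1$ and $2$ are excluded by the dimension count at the start of the subsection on ranks $1,2,3$: there $\mathcal{C}_0$ is pointed, so $\operatorname{rank}(\mathcal{C})\le 4$. Rank $3$ is excluded by the theorem of that subsection. For rank $4$, I will use the classification of integral rank-$4$ categories from \cite{Art18} (cases (i)--(iii)).
\begin{itemize}
\item Theorem \ref{theo:cate1} gives $R_1,R_2$ for $\mathbb{Z}_2\times\mathbb{Z}_2$ and $R_3,R_4$ for $\mathbb{Z}_4$. Which group occurs is read off from whether $X_1,X_2,X_3$ square to $\mathbf{1}$.
\item Theorem \ref{theo:cate3} gives $R_5,R_6$ for $\operatorname{Rep}(D_{10})$.
\item Theorem \ref{theo:cate} gives $R_7,R_8$ for $\operatorname{Rep}(A_4)$.
\end{itemize}
For rank $5$, I will invoke the list from \cite{Art18} together with Theorem \ref{theo:C5}. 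Lemma \ref{lem:abelian} rules out the nonabelian group cases, the $\mathbb{Z}_4+0$ near-group is excluded, and what remains are $R_9$, $R_{10}$, $R_{11}$ and $R_{12}$. Since every case is covered, the Grothendieck ring of $\mathcal{C}$ is one of $R_1,\dots,R_{12}$.

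For the realizability claims I will cite the realizations already recorded in the respective theorems:
\begin{itemize}
\item $\operatorname{Vec}(\mathbb{Z}_2)\boxtimes SU(2)_2$ and its zestings for $R_1$--$R_4$;
\item $SO(5)_2$ and its zesting for $R_5,R_6$;
\item the Tambara--Yamagami categories for $\mathbb{Z}_5$ (the preceding corollary shows they exist) for $R_9$;
\item the $\mathrm{Ising}^2$ construction for $R_{10}$.
\end{itemize}

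The only step requiring care is checking that the twelve rings are pairwise non-isomorphic, so that the list is genuinely a list of twelve. I would do this with invariants.
\begin{itemize}
\item The multiset of FP dimensions separates the groups: $(1,1,1,1,\sqrt2,\sqrt2)$ for $R_1$--$R_4$, $(1,1,2,2,\sqrt5,\sqrt5)$ for $R_5,R_6$, $(1,1,1,3,\sqrt6,\sqrt6)$ for $R_7,R_8$, $(1^5,\sqrt5)$ for $R_9$, and $\sqrt8$ in the odd part for $R_{10}$--$R_{12}$. Within the last group, $R_{10}$ has four invertibles, while $R_{11},R_{12}$ have two.
\item Within each pair, the duality pattern distinguishes the rings: whether the odd simples are self-dual or swapped, or whether $Z,M$ are self-dual.
\item Within $R_1$--$R_4$, the invertible group ($\mathbb{Z}_2^2$ versus $\mathbb{Z}_4$) distinguishes the rings further.
\end{itemize}
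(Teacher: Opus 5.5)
Your proposal follows the paper's route: the paper proves this summary by collecting the $\mathbb{Z}_2$-grading lemma, the exclusion of $\operatorname{rank}(\mathcal{C}_0)\le 3$, Theorems \ref{theo:cate1}, \ref{theo:cate3}, \ref{theo:cate} and \ref{theo:C5}, and the realizations cited there, and your pairwise non-isomorphism check is a reasonable addition that the paper leaves implicit. There are two small slips in the details: the odd simple object of $R_{11}$ and $R_{12}$ has $\operatorname{FPdim}=\sqrt{24}=2\sqrt{6}$, not $\sqrt{8}$ (this only makes separating them from $R_{10}$ easier), and Lemma \ref{lem:abelian} is applied only to $D_7$, $\mathbb{Z}_7\rtimes\mathbb{Z}_3$, $F_5$, $A_5$, after Theorem \ref{theo:C5} upgrades Grothendieck equivalence to a tensor equivalence $\mathcal{C}_0\cong\operatorname{Rep}(G)$, so ``rules out the nonabelian group cases'' read literally would also discard the $\operatorname{Rep}(S_4)$-type case that yields $R_{12}$.
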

	\section{categorification}\label{sec}
	
	In this section we will prove that the four candidate Grothendieck rings $\mathit{R}_{7},\mathit {R}_{8},\mathit{R}_{11}$ and $\mathit{R}_{12}$ cannot be realized by any fusion category, thereby leaving exactly eight realizable rings.
	\begin{theo}\label{theo:fch}
		There does not exist a fusion category whose Grothendieck ring is isomorphic to the ring $\mathit{R}_{7}$ or $\mathit{R}_{8}$ given in Theorem \ref{theo:cate}.
		
	\end{theo}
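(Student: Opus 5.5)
The plan is to exploit the $\mathbb{Z}_2$-grading $\mathcal{C}=\mathcal{C}_0\oplus\mathcal{C}_1$ that any categorification of $\mathit{R}_7$ or $\mathit{R}_8$ must carry, and to run an argument parallel to Lemma \ref{lem:abelian}. The non-trivial component $\mathcal{C}_1$ is an invertible $\mathcal{C}_0$-bimodule category. Hence, as a left $\mathcal{C}_0$-module category, it is indecomposable, and its dual $(\mathcal{C}_0)^*_{\mathcal{C}_1}$ must be tensor equivalent to $\mathcal{C}_0^{\mathrm{op}}$; in particular it has rank $4$. So it suffices to show that no indecomposable rank $2$ module category over $\mathcal{C}_0$ has a dual category of rank $4$. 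Note that the argument never uses the dualities of $M,N$, so it treats $\mathit{R}_7$ and $\mathit{R}_8$ simultaneously.

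\textbf{Step 1: identify $\mathcal{C}_0$.} The trivial component has Grothendieck ring $K_0(\operatorname{Rep}(A_4))$ and dimension $12$. By \cite[Lemma 2.3]{Art6}, as used in the proof of Theorem \ref{theo:C5}, $\mathcal{C}_0\cong\operatorname{Rep}(H)$ for a $12$-dimensional semisimple Hopf algebra $H$. I would then invoke the classification of semisimple Hopf algebras of dimension $12$ (Fukuda), which says they are all trivial, i.e.\ group algebras or their duals. Comparing Grothendieck rings then forces $\mathcal{C}_0\cong\operatorname{Rep}(A_4)$. This is the step I expect to be the main obstacle, since it relies on citing the correct classification result. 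As a fallback, I would note that this fusion ring is known to have only group-theoretical categorifications Morita equivalent to $\operatorname{Vec}_{A_4}^{\omega}$, which would suffice for the counting in Step 3.

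\textbf{Step 2: classify rank $2$ module categories.} Indecomposable $\operatorname{Rep}(A_4)$-module categories correspond to pairs $(K,\psi)$, with $K\le A_4$ up to conjugacy and $\psi\in H^2(K,\mathbb{C}^\times)$. The rank of the module category equals the number of irreducible $\psi$-projective representations of $K$. Running through the subgroups of $A_4$:
\begin{itemize}
\item $K=1$ gives rank $1$;
\item $K=\mathbb{Z}_3$ gives rank $3$;
\item $K=V_4$ gives rank $4$ for trivial $\psi$ and rank $1$ for nontrivial $\psi$;
\item $K=A_4$ gives rank $4$ for trivial $\psi$, and rank $3$ (projective irreducibles of dimension $2$) for the nontrivial class.
\end{itemize}
Hence the only rank $2$ option is $K=\mathbb{Z}_2=\langle(12)(34)\rangle$ with $\psi=1$.

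\textbf{Step 3: compute the dual category.} The dual is the group-theoretical category $\mathcal{C}(A_4,1,\mathbb{Z}_2,1)$. Its simple objects are indexed by pairs consisting of a double coset $\mathbb{Z}_2 g\mathbb{Z}_2$ and an irreducible representation of $\mathbb{Z}_2\cap g\mathbb{Z}_2g^{-1}$.
\begin{itemize}
\item For $g\in V_4$ the subgroup $\mathbb{Z}_2$ is normalized, so there are two double cosets of size $2$, each contributing $2$ simples. This gives $4$ invertible objects.
\item The remaining $8$ elements form two double cosets of size $4$ with trivial intersection. Each contributes one simple object of dimension $2$.
\end{itemize}
Thus the dual has rank $6$ (dimension check: $4+4+4=12$), not $4$. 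This contradicts invertibility of $\mathcal{C}_1$, so neither ring is categorifiable.
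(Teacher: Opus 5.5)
Your reduction is sound: invertibility of $\mathcal{C}_1$ forces $(\mathcal{C}_0)^*_{\mathcal{C}_1}\simeq\mathcal{C}_0^{\mathrm{op}}$. Steps 2--3 are also correct \emph{provided} $\mathcal{C}_0\simeq\operatorname{Rep}(A_4)$. In that case the only rank-$2$ module category is $\mathcal{M}(\mathbb{Z}_2,1)$, consistently with $Z\otimes M=M\oplus 2N$, since the $3$-dimensional representation restricts to $\langle(12)(34)\rangle$ as $\mathrm{triv}\oplus 2\,\mathrm{sgn}$. Its dual $\mathcal{C}(A_4,1,\mathbb{Z}_2,1)$ has simple dimensions $1,1,1,1,2,2$, so it is not $\mathcal{C}_0^{\mathrm{op}}$.

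\textbf{The gap is Step 1.} The lemma quoted as [Art6, Lemma 2.3] in the proof of Theorem~\ref{theo:C5} yields $\mathcal{C}_0\simeq\operatorname{Rep}(H)$ only because there $\mathcal{C}_1$ has a \emph{single} simple object. A rank-one $\mathcal{C}_0$-module category is exactly a fiber functor. For $R_7,R_8$ the component $\mathcal{C}_1=\{M,N\}$ has rank $2$, so nothing gives a fiber functor on $\mathcal{C}_0$. The classification of $12$-dimensional semisimple Hopf algebras therefore never becomes applicable. (Incidentally, I believe that classification does contain non-trivial examples, just none with irreducible degrees $1,1,1,3$.)

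This is a real gap, not just a citation issue. $K_0(\operatorname{Rep}(A_4))$ has a categorification with no fiber functor: $\mathcal{C}(A_4,\omega,\mathbb{Z}_3,1)$, where $\omega$ is the element of order $2$ in $H^3(A_4,\mathbb{C}^\times)\cong\mathbb{Z}_6$. Its simple objects have dimensions $1,1,1,3$. Since $\omega$ restricts nontrivially to $V_4$ (in fact to every subgroup of order $2$), neither $L=V_4$ nor $L=A_4$ yields a rank-one module category.

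Your fallback does not close this as stated. Steps 2--3 enumerate module categories of $\operatorname{Rep}(A_4)$ through projective representations of subgroups, which is specific to $\omega=1$. Morita equivalence preserves the set of module categories but not their ranks. For the twisted category, the module categories are $\mathcal{M}(L,\phi)$ with $\omega|_L$ trivial, so $L\in\{1,\mathbb{Z}_3\}$ up to conjugacy. Counting $(\mathbb{Z}_3,L)$-double cosets gives rank $4$ in both cases. So no rank-$2$ module category exists, and your conclusion does survive, but only after two further steps:
\begin{enumerate}
\item Prove that $\operatorname{Rep}(A_4)$ and this twisted category are the only categorifications of $K_0(\operatorname{Rep}(A_4))$. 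For instance, $\dim\mathcal{C}_0=2^2\cdot 3$ gives solvability; the universal grading is trivial and there is no invertible object of order $2$. Hence $\mathcal{C}_0$ is a $\mathbb{Z}_3$-equivariantization of some $\operatorname{Vec}_{V_4}^{\omega}$ with $\omega$ $\mathbb{Z}_3$-invariant.
\item Carry out the extra module-category count above.
\end{enumerate}

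The paper's proof avoids this issue entirely because it never identifies $\mathcal{C}_0$. It uses only ring-level data: the formal codegrees $24,24,8,8,3,3$, the forced decomposition of $\mathcal{I}(\mathbf{1})$, and the vanishing of $\operatorname{Tr}(\theta_{\mathcal{I}(X)})$ and $\operatorname{Tr}(\theta_{\mathcal{I}(Y)})$. It therefore covers every categorification at once.
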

	\begin{proof}
		Suppose there exists a fusion category $\mathcal{C}$ whose
Grothendieck ring $K(\mathcal{C})$ is isomorphic to either
$\mathit{R}_{7}$ or $\mathit{R}_{8}$.
The Frobenius-Perron dimensions of its simple objects are
$\operatorname{FPdim}(X)=\operatorname{FPdim}(Y)=1$,
$\operatorname{FPdim}(Z)=3$, and
$\operatorname{FPdim}(M)=\operatorname{FPdim}(N)=\sqrt{6}$,
so $\operatorname{FPdim}(\mathcal C)=24$.

In both rings, we have $\sum_{S\in\operatorname{Irr}(\mathcal C)}SS^*
=6\mathbf{1}+3X+3Y+4Z$.
The fusion rules give the same fusion matrices
for $X$, $Y$ and $Z$ in both rings. Hence
the matrix $\boldsymbol A=\sum_{S\in\operatorname{Irr}(\mathcal C)}M_SM_{S^*}=6I_6+3M_X+3M_Y+4M_Z$ is the same for both rings:

\[
\boldsymbol A=\begin{pmatrix}
6 & 3 & 3 & 4 & 0 & 0\\
3 & 6 & 3 & 4 & 0 & 0\\
3 & 3 & 6 & 4 & 0 & 0\\
4 & 4 & 4 & 20 & 0 & 0\\
0 & 0 & 0 & 0 & 16 & 8\\
0 & 0 & 0 & 0 & 8 & 16
\end{pmatrix}.
\]

Since $K(\mathcal{C})$ is commutative, all its irreducible
complex representations are one-dimensional. Hence
\cite[Remark 2.11]{Art2} shows that its formal codegrees are the eigenvalues of $\boldsymbol A$:
$f_1=f_2=24$, $f_3=f_4=8$, and $f_5=f_6=3$. Let $E_1,\dots,E_6$ be the complete set of such irreducible representations. By \cite[Theorem 2.13]{Art2}, the Drinfeld center $\mathcal Z(\mathcal{C})$ contains six pairwise non-isomorphic simple objects $A_1,\dots,A_6$ satisfying
		\[
		[\mathcal I(\mathbf 1):A_i]=\dim_{\mathbb C} E_i=1, \  \operatorname{FPdim} (A_i)=\frac{\operatorname{FPdim}(\mathcal{C})}{f_i},
		\]
		which yields \[
		\mathcal I(\mathbf 1)=A_1 \oplus A_2 \oplus A_3 \oplus A_4 \oplus A_5 \oplus A_6,
		\]
		and
		$$
		\operatorname{FPdim} (A_{1})=\operatorname{FPdim} (A_{2})=1,$$ $$\operatorname{FPdim} (A_{3})=\operatorname{FPdim} (A_{4})=3,\ \operatorname{FPdim} (A_{5})=\dim (A_{6})=8.
		$$

		The decompositions of
$\mathcal F(\mathcal I(\mathbf{1}))$,
$\mathcal F(\mathcal I(X))$ and
$\mathcal F(\mathcal I(Y))$ are also the same for both rings,
as they are determined by the common sum
$\bigoplus_{S\in\text{Irr}(\mathcal{C})} S\otimes S^{*} $ and multiplication by $X$ and $Y$.
Thus the following argument applies to both rings.
		
Since the FP-dimensions $\sqrt{6}$ of $M,N$ are not integers, each $\mathcal F(A_{i})$ can only be a combination of integer-dimension simple objects $1,X,Y,Z$. Using
		\[
		\begin{aligned}
			\mathcal F(\mathcal I(\mathbf 1)) &\cong \bigoplus_{S\in\text{Irr}(\mathcal{C})} S\otimes S^{*} = 6\cdot 1 + 3\cdot X + 3\cdot Y + 4\cdot Z
		\end{aligned}
		\]
		and $\mathcal F(\mathcal I(\mathbf 1))=\bigoplus_{i=1}^{6}\mathcal F(A_{i})$, we get each $\mathcal F(A_{i})$:
		\begin{enumerate}
			\item The two $1$-dimensional $A_{i}$ satisfy $\mathcal F(A_{i})=1$.
			\item Each $3$-dimensional $A_{i}$ has $\mathcal F(A_{i})=1 + uX + (2-u)Y$ for some $u\in\{0,1,2\}$.
			\item Each $8$-dimensional $A_{i}$ has $\mathcal F(A_{i})=1 + 2Z + vX + (1-v)Y$ for some $v\in\{0,1\}$.
		\end{enumerate}
		An object of dimension $8$ cannot contain $Z$ with multiplicity more than $2$ because of $\text{FPdim}(Z)=3$, hence exactly two copies of $Z$ appear in each $\mathcal F(A_{i})$ of the $8$-dimensional simple objects.
		
		Let $u_{1},u_{2}$ denote the coefficients of $X$ in the two $3$-dimensional $A_{i}$, and $v_{1},v_{2}$ the coefficients of $X$ in the two $8$-dimensional $A_{i}$. Write $a=u_{1}+u_{2}$ and $b=v_{1}+v_{2}$. The total coefficient of $X$ in $\mathcal F(\mathcal I(\mathbf 1))$ equals $3$, so
		$u_{1}+u_{2}+v_{1}+v_{2}=3 \Rightarrow a+b=3.$
		Now consider $\mathcal F(\mathcal I(X))$. We compute
		\[
		\mathcal F(\mathcal I(X)) \cong X\otimes \mathcal F(\mathcal I(\mathbf 1)) = 3\cdot \mathbf 1 + 6X + 3Y + 4Z.
		\]
		Then we can get $\operatorname{FPdim}\big(\mathcal I(X)\big)=\operatorname{FPdim}\big(\mathcal F\big(\mathcal I(X)\big)\big)= 24$.
		In the Drinfeld center $\mathcal Z(\mathcal{C})$, $\mathcal I(X)$ can be decomposed into simple objects and the multiplicity of $A_i$ appearing in $\mathcal I(X)$ satisfies
		$
		m_{A_i} := [\mathcal I(X):A_i] = [\mathcal F(A_i):X].
		$
		We define
		\[
		\begin{aligned}
			S_X &= \sum_{i=1}^{6}[\mathcal F(A_i):X]\cdot \operatorname{FPdim} (A_i) \\
			&= 0\cdot 1 + 0\cdot 1 + 3u_1 + 3u_2 + 8v_1 + 8v_2 \\
			&= 3(u_1+u_2)+8(v_1+v_2) = 3a + 8b.
		\end{aligned}
		\]
		Using the relation $b = 3-a$ derived earlier, we obtain
		$
		S_X = 3a + 8(3-a) = 24 - 5a.
		$
		The remaining part of $\mathcal I(X)$ is a direct sum of simple objects $B_{j}$ distinct from all $A_{i}$, with total dimension $24 - S_{X}$. By \cite[Theorem 2.5]{Art2}, $\operatorname{Tr}(\theta_{\mathcal I(W)})=0$ for any simple object $\mathbf 1 \neq W \in \operatorname{Irr}(\mathcal{C})$, where $\theta$ is the ribbon structure on the Drinfeld center $\mathcal{Z}(\mathcal{C})$. For $W=X$, we can get the identity:
		\[
		0 = \operatorname{Tr}(\theta_{\mathcal I(X)}) = \sum_{i} m_{A_{i}} \theta_{A_{i}} \operatorname{FPdim} A_{i} + \sum_{j} n_{j} \theta_{B_{j}} \operatorname{FPdim} B_{j}.
		\]
		Here $\theta_{A_i}=1$ by \cite[Theorem 2.5]{Art2},
and each $\theta_{B_j}$ is a root of unity by
\cite[Theorem 5.1]{ArtVafa}. Hence the first sum equals $S_{X}$, which means
		\[
		S_{X} + \sum_{j} n_{j} \theta_{B_{j}} \operatorname{FPdim} B_{j} = 0.
		\]
		Using $|\theta_{B_j}|=1$ gives
		\begin{equation}\label{eq:induction-triangle}
\begin{aligned}
|S_X|
&= \left|\sum_j n_j\theta_{B_j}
   \operatorname{FPdim}(B_j)\right| \\
&\leq \sum_j n_j|\theta_{B_j}|
   \operatorname{FPdim}(B_j) \\
&= \sum_j n_j\operatorname{FPdim}(B_j) \\
&= 24-S_X.
\end{aligned}
\end{equation}
		Since $S_{X}\geq 0$, we obtain $S_{X}\leq 12$. Combined with $S_{X}=24-5a$ we have
		$
		24 - 5a \leq 12 \Rightarrow a \geq 3.
		$
		Similarly, we compute
		\[
		\mathcal F(\mathcal I(Y))\cong Y\otimes \mathcal F(\mathcal I(\mathbf 1)) = 3\cdot 1 + 3X + 6Y + 4Z.
		\]
		Then we can get $\operatorname{FPdim}\big(\mathcal I(Y)\big)=\operatorname{FPdim}\big(\mathcal F\big(\mathcal I(Y)\big)\big)= 24$.
		We define
		\[
		\begin{aligned}
			S_Y &= \sum_{i}\,[\mathcal F(A_i):Y]\cdot \operatorname{FPdim} (A_i) \\
			&= 3\big((2-u_1)+(2-u_2)\big)+8\big((1-v_1)+(1-v_2)\big) \\
			&= 3(4-a)+8(2-b) \\
			&= 4+5a.
		\end{aligned}\]
		Similarly, $\operatorname{Tr}(\theta_{\mathcal I(Y)})=0$  gives $S_{Y}\leq 12$, so
		$
		4 + 5a \leq 12 \Rightarrow a \leq 1.
		$
		We now have $a\geq 3$ and $a\leq 1$, a contradiction.
		\end{proof}
	\begin{rem}\label{lem:cs-hom}
Let $\mathcal{C}$ be a fusion category, and let
$\{X_i\}_{i=1}^r$ be a complete set of pairwise non-isomorphic
simple objects of $\mathcal{C}$.
For any two objects $U,V\in\mathcal{C}$, write
$U=\bigoplus_i m_iX_i$ and $V=\bigoplus_i n_iX_i$,
where $m_i,n_i\in\mathbb{Z}_{\ge0}$.
For their multiplicity vectors $(m_i)$ and $(n_i)$, set
$\langle(m_i),(n_i)\rangle:=\sum_i m_in_i$.
This pairing is the standard inner product on $\mathbb{R}^r$.
Then
\[
\dim\operatorname{Hom}_{\mathcal{C}}(U,V)
=\langle(m_i),(n_i)\rangle.
\]
\end{rem}
        \begin{theo}
		There does not exist a fusion category whose Grothendieck ring is isomorphic to the ring $\mathit{R}_{11}$ or $\mathit{R}_{12}$, given in Theorem \ref{theo:C5}.
		
	\end{theo}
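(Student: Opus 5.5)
The plan is to run the same Drinfeld-center obstruction used in Theorem \ref{theo:fch}. First I compute the formal codegrees, then the objects $A_i\subset\mathcal I(\mathbf 1)$ they produce, and finally I bound how these objects can restrict to $\mathcal C$, using the vanishing of $\operatorname{Tr}(\theta_{\mathcal I(W)})$ for some $\mathbf 1\neq W$.

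\textbf{Step 1: dimensions and $\mathcal F(\mathcal I(\mathbf 1))$.} Suppose $\mathcal C$ categorifies $\mathit R_{11}$ or $\mathit R_{12}$. Then $\operatorname{FPdim}(\mathcal C_0)=24$ and $\operatorname{FPdim}(N)=2\sqrt6$, so $\operatorname{FPdim}(\mathcal C)=48$. Note that $N^2=\mathbf 1+X+2Y+3Z+3M$ is exactly the regular element of $K_0(\mathcal C_0)$. A direct computation gives
\[
\textstyle\sum_S SS^*=6\cdot\mathbf 1+2X+5Y+5Z+5M
\]
in both rings (for $\mathit R_{11}$ one uses $ZZ^*=ZM$). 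This is the decomposition of $\mathcal F(\mathcal I(\mathbf 1))$. As in Theorem \ref{theo:fch}, multiplying it by $X$, $Y$, $Z$ gives $\mathcal F(\mathcal I(W))$ for $W\in\operatorname{Irr}(\mathcal C_0)$.

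\textbf{Step 2: formal codegrees.} The rings are commutative, so I will read off the characters directly rather than diagonalizing $\boldsymbol A=\sum_S M_SM_{S^*}$.
\begin{itemize}
\item Every character of $K_0(\mathcal C_0)$ kills the regular element except the FP-character. Hence the FP-character extends to $K_0(\mathcal C)$ in two ways, with $N\mapsto\pm 2\sqrt6$, and each nontrivial character $\chi$ of $K_0(\mathcal C_0)$ extends uniquely with $N\mapsto 0$.
\item The character table of $K_0(\mathcal C_0)$ is that of $S_4$; for the isotype variant I will check that the character values are the same up to relabeling.
\item For an extension of a nontrivial $\chi$, $N$ contributes nothing to $\sum_S|\chi(S)|^2$. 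So the codegree is the $\mathcal C_0$-codegree, i.e. the centralizer order.
\end{itemize}
This gives formal codegrees $48,48,8,4,3,4$ (centralizer orders $8,4,3,4$). By \cite[Theorem 2.13]{Art2}, $\mathcal I(\mathbf 1)=\bigoplus_{i=1}^6 A_i$ with $\operatorname{FPdim}(A_i)=48/f_i$, that is, dimensions $1,1,6,12,16,12$. This sums to $48$, as it must.

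\textbf{Step 3: possible restrictions $\mathcal F(A_i)$.} Each $\mathcal F(A_i)$ contains $\mathbf 1$ exactly once and, by dimension integrality, contains no $N$. Matching against $6\cdot\mathbf 1+2X+5Y+5Z+5M$ gives the following.
\begin{itemize}
\item The two $1$-dimensional $A_i$ restrict to $\mathbf 1$.
\item The remaining four $A_i$ share $2X+5Y+5Z+5M$, with each $\mathcal F(A_i)$ having dimension $\operatorname{FPdim}(A_i)-1$.
\end{itemize}
I will enumerate the finitely many admissible distributions and introduce parameters, as with $u_i,v_i$ in Theorem \ref{theo:fch}. Concretely, let $x_i$ be the multiplicity of $X$ in $\mathcal F(A_i)$, with $\sum_i x_i=2$.

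\textbf{Step 4: the trace obstruction (main step).} For $W=X$ (and, if needed, $W=Y$ or $Z$), I will compute
\[
S_W=\sum_i [\mathcal F(A_i):W]\operatorname{FPdim}(A_i)
\]
and compare it with $\operatorname{FPdim}(\mathcal I(W))=48\operatorname{FPdim}(W)$. The trace argument (inequality \eqref{eq:induction-triangle}) forces $S_W\le \tfrac12\operatorname{FPdim}(\mathcal I(W))$.
\begin{itemize}
\item For $W=X$, I expect the constraint $\sum x_i=2$, with the $x_i$ supported on objects of dimension $6$, $12$ or $16$, to pin $S_X$ into a narrow range.
\item $W=Y,Z$ then yield competing inequalities, as the pair $a\ge3$, $a\le1$ did for $\mathit R_7,\mathit R_8$.
\end{itemize}
The main obstacle is that the single-inequality bound may not suffice for every distribution. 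In that case I will sharpen the counting using Remark \ref{lem:cs-hom}: compute $\dim\operatorname{Hom}(\mathcal I(\mathbf 1),\mathcal I(W))=\dim\operatorname{Hom}_{\mathcal C}(\mathcal F\mathcal I(\mathbf 1),W)$ as an inner product of multiplicity vectors. This forces an exact equation $\sum_i [\mathcal I(W):A_i]=[\mathcal F\mathcal I(\mathbf 1):W]$, together with a Cauchy–Schwarz-type bound on $\dim\operatorname{End}(\mathcal I(W))=[\mathcal F\mathcal I(W):W]$. These should cut the cases down to ones where the inequalities for $X$ and $Y$ conflict, giving the contradiction for both $\mathit R_{11}$ and $\mathit R_{12}$.
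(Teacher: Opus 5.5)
Your Steps 1--3 agree with the paper: the same $\sum_S SS^*=6\cdot\mathbf 1+2X+5Y+5Z+5M$, the same codegrees $48,48,8,4,4,3$, and the same $A_i$ of dimensions $1,1,6,12,12,16$. Reading the codegrees off characters instead of diagonalizing $\boldsymbol A$ is fine. One correction: the isotype variant's characters are \emph{not} those of $S_4$ up to relabeling, since the two characters with $X\mapsto -1$ send $Z,M$ to $\pm i$. Only their absolute values agree, which is all the codegrees need.

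\textbf{The gap is Step 4: the tools you name cannot produce a contradiction.} Take the trace inequalities $S_W\le\frac12\FPdim\mathcal I(W)$ for every $W\neq\mathbf 1$, together with the diagonal bounds $\sum_i[\mathcal I(W):A_i]^2\le[\mathcal F\mathcal I(W):W]$.
\begin{itemize}
\item $S_Y\le48$ forces $b_3=2$ and $b_6=0$. Up to swapping $A_4,A_5$, this leaves $\mathcal F(A_3)=\mathbf 1+X+2Y$, $\mathcal F(A_4)=\mathbf 1+X+2Y+xZ+(2-x)M$, $\mathcal F(A_5)=\mathbf 1+Y+yZ+(3-y)M$ and $\mathcal F(A_6)=\mathbf 1+zZ+(5-z)M$, with $x+y+z=5$.
\item $S_Z=60+4z\le72$ and $S_M=80-4z\le72$ then only force $z\in\{2,3\}$, leaving six triples.
\item For instance, $(x,y,z)=(1,2,2)$ gives $S_X=18\le24$, $S_Y=48\le48$, $S_Z=68\le72$ and $S_M=72\le72$.
\item For the same triple, the quantities $[\mathcal F\mathcal I(W):W]-\sum_i[\mathcal I(W):A_i]^2$ equal $4,4,12,10$ for $W=X,Y,Z,M$, all nonnegative.
\end{itemize}
The same holds for all six triples. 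So no two trace inequalities conflict here as they did for $\mathit R_7,\mathit R_8$, and the reduction you hoped for does not happen. The only off-diagonal space you mention, $\operatorname{Hom}(\mathcal I(\mathbf 1),\mathcal I(W))$, gives nothing new: $\mathcal I(\mathbf 1)=\bigoplus_i A_i$ has no component outside the $A_i$, so it just recovers the column sums $\sum_i a_i=2$, $\sum_i b_i=5$, and so on.

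\textbf{The missing idea is an off-diagonal comparison between the inductions of two different simple objects.} Split $\mathcal I(S)=\mathcal I_A(S)\oplus\mathcal I_B(S)$ into the part supported on the $A_i$ and the rest, and let $B_S$ be the multiplicity vector of $\mathcal I_B(S)$. By Remark \ref{lem:cs-hom},
\[
\langle B_S,B_T\rangle=[\mathcal F\mathcal I(S):T]-\langle\mathcal I_A(S),\mathcal I_A(T)\rangle .
\]
The relevant values are $[\mathcal F\mathcal I(Y):Y]=13$, $[\mathcal F\mathcal I(Y):Z]=[\mathcal F\mathcal I(Y):M]=10$ and $[\mathcal F\mathcal I(Z):Z]=[\mathcal F\mathcal I(M):M]=21$. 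Hence $\langle B_Y,B_Y\rangle=13-9=4$. The Cauchy--Schwarz bound $\langle B_Y,B_T\rangle^2\le 4\langle B_T,B_T\rangle$ for $T=Z$ or $T=M$ then excludes every remaining case. In the example above, $\langle B_Y,B_M\rangle=10-3=7$ and $\langle B_M,B_M\rangle=10$, so $49>40$. This step is what closes the paper's proof. Your proposal applies Cauchy--Schwarz only to $\operatorname{End}(\mathcal I(W))$, and that diagonal information is exactly what was just shown to be insufficient.
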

	\begin{proof}
Suppose there exists a fusion category $\mathcal{C}$ whose
Grothendieck ring $K(\mathcal{C})$ is isomorphic to either
$\mathit{R}_{11}$ or $\mathit{R}_{12}$.
The Frobenius-Perron dimensions of its simple objects are
$\operatorname{FPdim}(X)=1$,
$\operatorname{FPdim}(Y)=2$,
$\operatorname{FPdim}(Z)=\operatorname{FPdim}(M)=3$, and
$\operatorname{FPdim}(N)=2\sqrt{6}$,
so $\operatorname{FPdim}(\mathcal C)=48$.

In both rings,
$\sum_{S\in\operatorname{Irr}(\mathcal C)}SS^*
=6\mathbf{1}+2X+5Y+5(Z+M)$.
The fusion rules give the same multiplication matrices
for $X$, $Y$, and $Z+M$ in both rings.
Hence the matrix
$\boldsymbol A=6I_6+2M_X+5M_Y+5(M_Z+M_M)$
is the same for both rings:
\[
\boldsymbol A=
\begin{pmatrix}
6 & 2 & 5 & 5 & 5 & 0\\
2 & 6 & 5 & 5 & 5 & 0\\
5 & 5 & 13 & 10 & 10 & 0\\
5 & 5 & 10 & 21 & 17 & 0\\
5 & 5 & 10 & 17 & 21 & 0\\
0 & 0 & 0 & 0 & 0 & 48
\end{pmatrix}.
\]

Since $K(\mathcal C)$ is commutative, all its irreducible
complex representations are one-dimensional.
Hence \cite[Remark 2.11]{Art2} shows that its formal codegrees
are the eigenvalues of $\boldsymbol A$:
$f_1=f_2=48$, $f_3=8$, $f_4=f_5=4$, and $f_6=3$.
Let $E_1,\dots,E_6$ be the complete set of such irreducible
representations.
By \cite[Theorem 2.13]{Art2}, the Drinfeld center
$\mathcal Z(\mathcal C)$ contains six pairwise non-isomorphic
simple objects $A_1,\dots,A_6$ satisfying
$[\mathcal I(\mathbf{1}):A_i]=1$ and
$\operatorname{FPdim}(A_i)=48/f_i$.
Thus
\[
\mathcal I(\mathbf{1})
=A_1\oplus A_2\oplus A_3\oplus A_4\oplus A_5\oplus A_6,
\]
where
$\operatorname{FPdim}(A_1)=\operatorname{FPdim}(A_2)=1$,
$\operatorname{FPdim}(A_3)=6$,
$\operatorname{FPdim}(A_4)=\operatorname{FPdim}(A_5)=12$,
and $\operatorname{FPdim}(A_6)=16$.

The decompositions of
$\mathcal F(\mathcal I(\mathbf{1}))$,
$\mathcal F(\mathcal I(Y))$,
$\mathcal F(\mathcal I(Z))$ and
$\mathcal F(\mathcal I(M))$ are also the same for both rings:
the first is $6\mathbf{1}+2X+5Y+5Z+5M$,
and multiplying it by $Y$, $Z$ and $M$,
respectively, gives the same results in both rings.
Thus the following argument applies to both rings.

By adjunction,
$[\mathcal F(A_i):\mathbf{1}]
=[\mathcal I(\mathbf{1}):A_i]=1$.
In particular,
$\mathcal F(A_1)=\mathcal F(A_2)=\mathbf{1}$.
Since $N$ has irrational Frobenius-Perron dimension,
it cannot occur in any $\mathcal F(A_i)$.
For $i=3,4,5,6$, write
\[
\mathcal F(A_i)
=\mathbf{1}+a_iX+b_iY+c_iZ+d_iM,
\ s_i=c_i+d_i,
\]
where all coefficients are nonnegative integers.
The decomposition
\[
\mathcal F(\mathcal I(\mathbf{1}))
\cong\bigoplus_{S\in\operatorname{Irr}(\mathcal C)}
S\otimes S^*
=6\mathbf{1}+2X+5Y+5Z+5M
\]
and the dimensions of the $A_i$ give
\begin{equation}\label{eq:system}
\begin{cases}
a_3+2b_3+3s_3=5,\quad a_4+2b_4+3s_4=11,\\
a_5+2b_5+3s_5=11,\quad a_6+2b_6+3s_6=15,\\[2pt]
\displaystyle
\sum_{i=3}^6 a_i=2,\quad
\sum_{i=3}^6 b_i=5,\quad
\sum_{i=3}^6 s_i=10.
\end{cases}
\end{equation}

Set
$S_Y=\sum_i[\mathcal I(Y):A_i]\operatorname{FPdim}(A_i)$.
By the same argument as in \eqref{eq:induction-triangle},
$S_Y\le\operatorname{FPdim}(\mathcal I(Y))-S_Y=96-S_Y$.
Hence
$S_Y=6b_3+12b_4+12b_5+16b_6\le48$.
Solving \eqref{eq:system} subject to this inequality gives
exactly two nonnegative integer solutions:
\[
\begin{array}{c|ccc}
 & (a_3,a_4,a_5,a_6)
 & (b_3,b_4,b_5,b_6)
 & (s_3,s_4,s_5,s_6)\\ \hline
(1)&(1,1,0,0)&(2,2,1,0)&(0,2,3,5)\\
(2)&(1,0,1,0)&(2,1,2,0)&(0,3,2,5).
\end{array}
\]
These two solutions are exchanged by interchanging $A_4$ and $A_5$.
Since these objects have the same dimension and the same
multiplicity in $\mathcal I(\mathbf{1})$, this relabeling
preserves all the constraints above.
We may therefore assume solution (1).

Since $s_3=0$, we have $c_3=d_3=0$.
Write $c_4=x$, $c_5=y$, and $c_6=z$.
Then $d_4=2-x$, $d_5=3-y$, and $d_6=5-z$, so
\[
\begin{aligned}
\mathcal F(A_3)&=\mathbf{1}+X+2Y,\\
\mathcal F(A_4)&=\mathbf{1}+X+2Y+xZ+(2-x)M,\\
\mathcal F(A_5)&=\mathbf{1}+Y+yZ+(3-y)M,\\
\mathcal F(A_6)&=\mathbf{1}+zZ+(5-z)M.
\end{aligned}
\]
The coefficients satisfy $x+y+z=5$,
$0\le x\le2$, and $0\le y\le3$.
There are twelve such triples, listed in
Table~\ref{tab:TYcases}.

For $S\in\{Y,Z,M\}$, write
$\mathcal I(S)=\mathcal I_A(S)\oplus\mathcal I_B(S)$,
where $\mathcal I_A(S)$ consists of the simple summands
isomorphic to some $A_i$, and $\mathcal I_B(S)$ consists
of the remaining simple summands.
By adjunction,
$[\mathcal I(S):A_i]=[\mathcal F(A_i):S]$, so
\[
\begin{aligned}
\mathcal I_A(Y)&=2A_3\oplus2A_4\oplus A_5,\\
\mathcal I_A(Z)&=xA_4\oplus yA_5\oplus zA_6,\\
\mathcal I_A(M)&=(2-x)A_4\oplus(3-y)A_5\oplus(5-z)A_6.
\end{aligned}
\]

For each $T\in\operatorname{Irr}(\mathcal C)$,
by computing
$\mathcal F(\mathcal I(T))
\cong\bigoplus_{W\in\operatorname{Irr}(\mathcal C)}
W\otimes T\otimes W^*$,
we obtain the matrix
$M_{S,T}:=[\mathcal I(S):\mathcal I(T)]
=[\mathcal F(\mathcal I(T)):S]$
restricted to $\{Y,Z,M\}$:
\[
\begin{array}{c|ccc}
 & Y & Z & M\\ \hline
Y & 13 & 10 & 10\\
Z & 10 & 21 & 17\\
M & 10 & 17 & 21
\end{array}
\]
Let $B_S$ denote the multiplicity vector of $\mathcal I_B(S)$
with respect to the simple objects of $\mathcal Z(\mathcal C)$
distinct from all $A_i$.
By Remark~\ref{lem:cs-hom}, since
$\mathcal I_A(S)$ and $\mathcal I_B(T)$
have no common simple summands, we have
\[
\begin{aligned}
\langle B_S,B_T\rangle
&=\dim\operatorname{Hom}
  (\mathcal I_B(S),\mathcal I_B(T))\\
&=\dim\operatorname{Hom}
  (\mathcal I(S),\mathcal I(T))
  -\dim\operatorname{Hom}
  (\mathcal I_A(S),\mathcal I_A(T)).
\end{aligned}
\]
Consequently,
\[
\begin{aligned}
\langle B_Y,B_Y\rangle
 &=13-(2^2+2^2+1^2)=4,\\
\langle B_Z,B_Z\rangle
 &=21-(x^2+y^2+z^2),\\
\langle B_M,B_M\rangle
 &=21-\big((2-x)^2+(3-y)^2+(5-z)^2\big),\\
\langle B_Y,B_Z\rangle
 &=10-(2x+y),\\
\langle B_Y,B_M\rangle
 &=10-\big(2(2-x)+(3-y)\big)=3+2x+y.
\end{aligned}
\]
Applying the Cauchy--Schwarz inequality and using
$\langle B_Y,B_Y\rangle=4$, we obtain
\[
\begin{aligned}
\langle B_Y,B_Z\rangle^2
&\leq \langle B_Y,B_Y\rangle\langle B_Z,B_Z\rangle
=4\langle B_Z,B_Z\rangle,\\
\langle B_Y,B_M\rangle^2
&\leq \langle B_Y,B_Y\rangle\langle B_M,B_M\rangle
=4\langle B_M,B_M\rangle.
\end{aligned}
\]
We test all twelve triples in Table~\ref{tab:TYcases}.
Each violates either the nonnegativity of a squared norm
or one of these two bounds.

\begin{table}[htbp]
\centering
\caption{Testing all twelve triples $(x,y,z)$ for solution (1)}
\label{tab:TYcases}
\renewcommand{\arraystretch}{1.2}
\resizebox{\linewidth}{!}{%
\begin{tabular}{c|c|c|c|c|c|c}
\toprule
$(x,y,z)$
& $\langle B_Z,B_Z\rangle$
& $\langle B_M,B_M\rangle$
& $\langle B_Y,B_Z\rangle$
& $\langle B_Y,B_M\rangle$
& CS 1 & CS 2\\
\midrule
$(0,0,5)$ & $-4$ $(\times)$ & $-$ & $-$ & $-$
& $-$ & $-$\\
$(0,1,4)$ & $4$ & $-$ & $9$ & $-$
& $(81>16)$ $\times$ & $-$\\
$(0,2,3)$ & $8$ & $-$ & $8$ & $-$
& $(64>32)$ $\times$ & $-$\\
$(0,3,2)$ & $8$ & $-$ & $7$ & $-$
& $(49>32)$ $\times$ & $-$\\
$(1,0,4)$ & $4$ & $-$ & $8$ & $-$
& $(64>16)$ $\times$ & $-$\\
$(1,1,3)$ & $10$ & $-$ & $7$ & $-$
& $(49>40)$ $\times$ & $-$\\
$(1,2,2)$ & $12$ & $10$ & $6$ & $7$
& $(36\le48)$ $\checkmark$ & $(49>40)$ $\times$\\
$(1,3,1)$ & $10$ & $4$ & $5$ & $8$
& $(25\le40)$ $\checkmark$ & $(64>16)$ $\times$\\
$(2,0,3)$ & $8$ & $-$ & $6$ & $-$
& $(36>32)$ $\times$ & $-$\\
$(2,1,2)$ & $12$ & $8$ & $5$ & $8$
& $(25\le48)$ $\checkmark$ & $(64>32)$ $\times$\\
$(2,2,1)$ & $12$ & $4$ & $4$ & $9$
& $(16\le48)$ $\checkmark$ & $(81>16)$ $\times$\\
$(2,3,0)$ & $8$ & $-4$ $(\times)$ & $-$ & $-$
& $-$ & $-$\\
\bottomrule
\end{tabular}%
}
\par\smallskip
{\footnotesize\raggedright
CS 1 and CS 2 refer to the Cauchy--Schwarz inequalities for
$(B_Y,B_Z)$ and $(B_Y,B_M)$.
The symbols $\checkmark$ and $\times$ indicate that the
corresponding condition holds or fails; a dash indicates an omitted entry, since at least one
condition already fails in that row.
\par}
\end{table}

Thus neither solution of \eqref{eq:system} can occur.
Since the argument applies to both rings,
neither $\mathit R_{11}$ nor $\mathit R_{12}$
can be the Grothendieck ring of a fusion category.
\end{proof}
Combining the above with the realizability results from Section \ref{3}, we obtain the following complete classification.
\begin{theo}\label{complete}
	Let $\mathcal{C}$ be a strictly weakly integral fusion category of rank $6$. Then the Grothendieck ring of $\mathcal{C}$ is isomorphic to one of the following eight rings, each of which can be realized:
	\[
		\mathit{R}_{1},\ \mathit {R}_{2},\ \mathit{R}_{3},\ \mathit {R}_{4},\ \mathit{R}_{5},\ \mathit {R}_{6},\ \mathit{R}_{9},\ \mathit {R}_{10}.
		\]
The rings $\mathit{R}_{1}$ and $\mathit{R}_{2}$ can be realized by $\operatorname{Vec}(\mathbb{Z}_2)\boxtimes\operatorname{SU}(2)_2$ and its zesting; $\mathit{R}_{3}$ and $\mathit{R}_{4}$ can be realized by the zesting of $\operatorname{Vec}(\mathbb{Z}_2)\boxtimes\operatorname{SU}(2)_2$; $\mathit{R}_{5}$ and $\mathit{R}_{6}$ can be realized by $\operatorname{SO}(5)_2$ and its zesting; $\mathit{R}_{9}$ can be realized by the Tambara-Yamagami category associated with $\mathbb{Z}_5$; $\mathit{R}_{10}$ can be realized in the $\mathrm{Ising}^2$ CFT.
\end{theo}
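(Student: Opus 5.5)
This final theorem is an assembly of results already established, so the proof is bookkeeping rather than new computation. Let $\mathcal C$ be strictly weakly integral of rank $6$. By the first lemma of Section \ref{3}, $\mathcal C=\mathcal C_0\oplus\mathcal C_1$ is a faithful $\mathbb Z_2$-extension with $\mathcal C_0$ integral, so $1\le\rank(\mathcal C_0)\le 5$.

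I would dispose of the low ranks first. Ranks $1$ and $2$ are excluded by the dimension count at the start of that subsection, which forces $\rank(\mathcal C)\le 4$. Rank $3$ is excluded by the first theorem of that subsection.

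For $\rank(\mathcal C_0)=4$, I would run through the classification of \cite{Art18}, which lists four possibilities.
\begin{itemize}
\item Pointed with group $\mathbb Z_2\times\mathbb Z_2$ or $\mathbb Z_4$: Theorem \ref{theo:cate1} gives $R_1,\dots,R_4$.
\item $\operatorname{Rep}(D_{10})$: Theorem \ref{theo:cate3} gives $R_5,R_6$.
\item $\operatorname{Rep}(A_4)$: Theorem \ref{theo:cate} gives $R_7,R_8$.
\end{itemize}
For $\rank(\mathcal C_0)=5$, Theorem \ref{theo:C5} shows the only surviving cases yield $R_9,\dots,R_{12}$. The summary theorem at the end of Section \ref{3} therefore places $K_0(\mathcal C)$ among $R_1,\dots,R_{12}$.

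Next I remove the four non-categorifiable candidates. Theorem \ref{theo:fch} excludes $R_7$ and $R_8$, and the subsequent theorem, via the Cauchy--Schwarz argument, excludes $R_{11}$ and $R_{12}$. What remains is exactly the list $R_1,\dots,R_6,R_9,R_{10}$.

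For realizability I would simply cite the realizations recorded where each ring was obtained.
\begin{itemize}
\item $R_1$ by $\operatorname{Vec}(\mathbb Z_2)\boxtimes SU(2)_2$, and $R_2,R_3,R_4$ by its zestings (Theorem \ref{theo:cate1}, via \cite{Art20}).
\item $R_5$ by $SO(5)_2$ and $R_6$ by its zesting (Theorem \ref{theo:cate3}).
\item $R_9$ by the Tambara--Yamagami categories over $\mathbb Z_5$, which exist by \cite{Art19} as in the corollary.
\item $R_{10}$ in the $\mathrm{Ising}^2$ CFT (\cite{Art28}).
\end{itemize}
Each of these realizing categories should also be checked to have rank $6$ and to be strictly weakly integral. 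This is immediate from the dimensions: for example $\sqrt2$ appears in $R_1$–$R_4$ and $\sqrt5$ appears in $R_5,R_6,R_9$.

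The one step needing any care is confirming that the case split is exhaustive. This means checking that the rank-$4$ and rank-$5$ integral classifications quoted from \cite{Art18} were each fully treated. In rank $5$, the nonabelian group cases and $\mathbb Z_4+0$ were killed inside Theorem \ref{theo:C5}, so nothing is missed. I would also note in passing that the eight surviving rings are pairwise non-isomorphic, by comparing FP dimensions and self-duality data; this justifies the phrase "exactly one of" from the introduction.
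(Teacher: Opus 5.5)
Your proposal is correct and matches the paper, whose proof of this theorem is exactly this assembly. It combines the reduction to a faithful $\mathbb{Z}_2$-extension with integral trivial component, the case analysis on $\operatorname{rank}(\mathcal{C}_0)$ giving $R_1,\dots,R_{12}$, the Section~4 non-categorifiability results for $R_7,R_8,R_{11},R_{12}$, and the realizations recorded in Section~3; your extra checks (strict weak integrality of the realizing categories, and pairwise non-isomorphism of the eight rings via FP dimensions and duality data) are correct additions that the paper leaves implicit.
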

Combining Theorem \ref{complete} with the known classification results
in lower ranks, we obtain a complete classification of
Grothendieck rings of strictly weakly integral fusion categories
of rank at most $6$.
	
\end{document}